\newtheorem{theorem}{Theorem}[section]
\newtheorem{proposition}[theorem]{Proposition}
\newtheorem{corollary}[theorem]{Corollary}
\newtheorem{lemma}[theorem]{Lemma}
\newtheorem{definition}[theorem]{Definition}
\theoremstyle{plain}
\newtheorem*{theorem*}{Theorem}
\def\dashint{\,\ThisStyle{\ensurestackMath{%
			\stackinset{c}{.2\LMpt}{c}{.5\LMpt}{\SavedStyle-}{\SavedStyle\phantom{\int}}}%
		\setbox0=\hbox{$\SavedStyle\int\,$}\kern-\wd0}\int}
\newcommand{\rn}{\mathbb{R}^n}
\newcommand{\rd}{\mathbb{R}^d}
\newcommand{\pk}{\mathbb{P}_k}
\newcommand{\hrn}{{H}^1(\mathbb{R}^n)}
\newcommand{\hpo}{{H}^p(\Omega)}
\newcommand{\hprn}{{H}^p(\mathbb{R}^n)}
\newcommand{\mom}{M_{\Omega, \varphi}}
\newcommand{\cinftyc}[1]{C_{0}^{\infty}(#1)}
\newcommand{\hlip}[2]{\dot{\Lambda}^{#1}(\mathbb{R}^{#2})}
\newcommand{\ckrn}{C^k(\rn)}
\newcommand{\linfno}[2]{\|#1\|_{L^{\infty}(#2)}}
\newcommand{\lprn}{{L}^p(\rn)}
\newcommand{\lpon}[1]{\|#1\|_{{L}^p(\Omega)}}
\newcommand{\hpon}[1]{\|#1\|_{\hpo}}
\newcommand{\hprnn}[1]{\|#1\|_{\hprn}}
\newcommand{\lprnn}[1]{\|#1\|_{\lprn}}
\newcommand{\dist}[1]{\text{d}(#1)}
\newcommand{\infn}[1]{\|#1\|_{{L}^{\infty}}}
\newcommand{\bmorn}{{BMO}(\rn)}
\newcommand{\bmornn}[1]{\|#1\|_{{BMO}(\rn)}}
\newcommand{\hrnn}[1]{\|#1\|_{\hrn}}
\newcommand{\wkp}{W^{k,p}}
\newcommand{\lp}{L^p}
\newcommand{\hp}{H^p}
\newcommand{\hlipn}[3]{\|#1\|_{\hlip{#2}{#3}}}
\pgfplotsset{compat=1.18}
\begin{document}

	\title{Extension domains for Hardy spaces}
	\author{Shahaboddin Shaabani}
	\address{Department of Mathematics and Statistics\\Concordia University}
	\email{shahaboddin.shaabani@concordia.ca}
	\date{}
	
	\begin{abstract}
		We show that a proper open subset $\Omega\subset \rn$ is an extension domain for $\hp$ ($0<p\le1$), if and only if it satisfies a certain geometric condition. When $n(\frac{1}{p}-1)\in \mathbb{N}$ this condition is equivalent to the global Markov condition for $\Omega^c$, for $p=1$ it is stronger, and when $n(\frac{1}{p}-1)\notin \mathbb{N}\cup \{0\}$ every proper open subset is an extension domain for $H^p$. It is shown that in each case a linear extension operator exists. We apply our results to study some complemented subspaces of $\bmorn$.
	\end{abstract}	
	\keywords{Atomic decomposition, Extension domains, Hardy spaces, Markov inequality}
	\subjclass{42B30}
	\maketitle	
	\section{Introduction}	
	In general for a given function space $X$, the extension problem is to  characterize open subsets $\Omega\subset\rn$, such that there exists a bounded  (linear or nonlinear) operator, extending every element of $X(\Omega)$  to an element of $X(\rn)$. Such domains are called extension domains for $X$.\\
	
	This problem has been studied for several classes of functions like Sobolev spaces $\wkp$ \cite{MR0631089}, functions of bounded mean oscillation $BMO$ \cite{MR0554817}, $VMO$\cite{MR4289249}, etc. Here, we're going to give an answer to the extension problem for Hardy spaces $\hp$ when $0<p\le 1$. As it is well known, this scale of spaces serves as a good substitute for $\lp$ when $0<p\le 1$ \cite{MR1232192}. Unlike $\lp$, being in $\hp$ is not just a matter of ``size'' but also a matter of ``cancellation''. Indeed, these distributions must have vanishing moments up to order $n(\frac{1}{p}-1)$, and this makes the extension problem a bit nontrivial. In Theorem \ref{theorem1}, which is the main result of this paper, we give a characterization of these domains, and below we explain the idea behind the proof.\\
	
	By Miyachi's atomic decomposition theorem, the extension problem is reduced to extending certain elements of $\hpo$, namely $(p,\Omega)$-atoms. These are atoms without any cancellations and are supported on Whitney cubes of $\Omega$. Then since our characterization requires $\Omega^c$ to be thick in some sense, in dilated Whitney cubes, we are able to create the required cancellation. When $p=1$, this can be done by putting a proper constant on $\Omega^c$ in that dilated cube, and for $p=\frac{n}{n+k}$, adding a suitable linear combination of derivatives of Dirac masses supported on $\Omega^c$ will do the job. The latter case involves interpolating data with polynomials of several variables, and we show that with the condition mentioned in Theorem \ref{theorem1}, it is always possible to do that. Also in the absence of these thickness conditions, we are able to construct some functions in the dual of $\hprn$, namely $\bmorn$ and $\hlip{k}{n}$, which are supported on $\Omega$ and have small norms yet large averages. This prevents any bounded extension of $(p,\Omega)$-atoms. Finally, when $n(\frac{1}{p}-1)$ is not an integer, it can be shown that every open set is an extension domain.\\
	\section{Notation and Definitions}	
	We begin by fixing some notation that we use throughout the paper. We use $d(x,H)$ for the distance of $x$ from $H$, and $d(x)=d(x,\Omega^c)$. The space of smooth functions with compact support in $\Omega$ is denoted by $\cinftyc{\Omega}$, distributions in $\Omega$ by $\mathcal{D}'(\Omega)$, and $\left\langle	
	,\right\rangle$ is used for pairing distributions and test functions. $\ckrn$ is used for the space of functions with bounded continuous derivatives up to order $k$ with the usual sup-norm, and ${\|P\|}_{C^{k}(E)}$ means that we take the $\sup$-norms only on $E$. Also the space of polynomials of degree no more than $k$ in $\rn$ is denoted by $\pk$, as usual $\partial^\alpha$  is used for partial derivatives, $|\alpha|$ for the total degree of $\alpha$, and other famous notations is used as well. By a cube $Q$ we mean a  cube with sides parallel to the axis and we use $l(Q)$ for its side length. We use $aQ$ or $aB$ to denote the concentric dilation of a cube $Q$ or a ball $B$. Also $\mathcal{W}(\Omega)$ is used for the family of Whitney cubes of $\Omega$ (for the definition of Whitney cubes see \cite{MR2445437} App.J, or \cite{MR1060724}). Moreover, the useful symbols $\lesssim$, $\gtrsim$ and $\approx$ are used to avoid writing unimportant constants usually depending on $k$ and $n$.\\
	
	Next, let us recall definitions of the dual of Hardy spaces $\hprn$. The space of functions of bounded mean oscillation $\bmorn$ consists of all locally integrable functions $f$ such that  
	\[
	\sup_{Q}\dashint_Q|f-\dashint_Qf|<\infty,
	\]
	where in the above the $\sup$ is taken over all cubes $Q$. Modulo constants,  $\bmorn$ is a Banach space with the above quantity as norm, and as it is well known it is the dual space of $\hrn$ (see \cite{MR0807149} Ch.3, \cite{MR2463316} Ch.3,\cite{MR1232192} Ch.4).\\
	
	When $0<p<1$, the dual space of $\hprn$ coincides with the homogeneous Lipschitz space $\hlip{n(\frac{1}{p}-1)}{n}$, which we now define. For a function $f$ defined on $\rn$ and $h\in \rn$, the forward difference operator $\Delta_h$ is defined by $\Delta_h(f)(x):=f(x+h)-f(x)$, and for any positive integer $k$ let $\Delta^k_h(f):=\Delta_h(\Delta_h...(\Delta_h(f)))$ be the $k$-th iteration of $\Delta_h$.
	Then for any positive real number $\gamma$, the space of homogeneous Lipschitz functions $\hlip{\gamma}{n}$ consists of all locally integrable functions $f$ such that 
	
	\[
	\sup_{h\ne0}\linfno{\frac{\Delta^{[\gamma]+1}_h(f)}{|h|^{\gamma}}}{\rn}<\infty.
	\]
	Modulo polynomials of degree no more than $\gamma$, $\hlip{\gamma}{n}$ is also a Banach space with the above quantity as norm, and as we already mentioned, for $0<p<1$ the dual space of $\hprn$ is $\hlip{n(\frac{1}{p}-1)}{n}$ (see \cite{MR0807149} Ch.3 \cite{MR2463316} Ch.1). Here we want to make this clear that $\hlip{\gamma}{n}$ is referred to by different names. For instance, when $\gamma=1$ it's called Zygmund space, or it is identical with the homogeneous Besov space $\dot{B}_{\infty,\infty}^\gamma(\rn)$ (see \cite{MR4567945} Ch.17).\\
	
	Now we turn to the definition of Hardy spaces on domains. Let $\Omega$ be a proper open subset of $\rn$ and $f$ a distribution in $\mathcal{D}^{'}(\Omega)$. Also, let $\varphi$ be a smooth function supported in the unit ball of $\rn$ such that $\int \varphi =1$, and for $t>0$, let $\varphi_t(x)=t^{-n}\varphi(\frac{x}{t})$. Then, the maximal function of $f$ associated to $\Omega$ and $\varphi$, denoted by $M_{\Omega, \varphi}$, is defined by
	
	\[
	\mom(f)(x):=\sup\{ \left\langle	
	f,\varphi_t(x-.)\right\rangle  |0<t<\text{dist}(x,\Omega^c)\} \quad \text{for}  \quad x\in \Omega.
	\]
	\begin{definition}
		For $0<p\le 1$, $\hpo$ is the space of all distributions $f$ on $\Omega$ such that $\mom(f)$ is in ${L}^p(\Omega)$. This defines a linear space, which is independent of $\varphi$ and can be quasi normed (normed when $p=1$) by setting $\hpon{f}=\lpon{\mom(f)}$. For different choices of $\varphi$, these quasi-norms are equivalent.
	\end{definition}		
	The Hardy space $\hpo$ was introduced by Miyachi in \cite{MR1060724}, and an atomic decomposition theorem was proved for it there. As a result, many fundamental properties of these spaces, including characterization of the dual space, density of $\cinftyc{\Omega}\cap \hpo$, complex interpolation and extension problem were studied although the latter was not fully resolved there. Meaning the following sufficient condition was obtained but as we will see it's not necessary.
	\begin{theorem}[Miyachi]
		Let $\Omega$ be a proper open subset of $\rn$. Suppose there exists a constant $a>1$ such that for every $x\in \Omega$ there is $x'\in \Omega^c$ with $d(x,x')< ad(x,\Omega^c)$ and $d(x',\Omega)>a^{-1}d(x,\Omega^c)$. Then there exists a bounded linear extension operator from $\hpo$ to $\hprn$.
	\end{theorem}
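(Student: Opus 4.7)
My plan is to reduce the extension problem to the level of individual atoms via Miyachi's atomic decomposition for $\hpo$, and then, for each $(p,\Omega)$-atom $b$ (a function supported on a Whitney-type cube $Q\subset\Omega$ with $\|b\|_{\infty}\le|Q|^{-1/p}$ and no vanishing moments), construct a function $E(b)$ supported on a slightly larger cube which \emph{does} have the required moment cancellation, by using the hypothesis to place the compensating mass inside $\Omega^c$. Because $E$ will be defined linearly in $b$, any linear atomic decomposition $f=\sum\lambda_ib_i$ in $\hpo$ will yield an extension $\sum\lambda_iE(b_i)\in\hprn$ with
\[
\Big\|\sum\lambda_iE(b_i)\Big\|_{\hprn}^{p}\lesssim\sum|\lambda_i|^{p}\lesssim\|f\|_{\hpo}^{p},
\]
which is exactly what is required.

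Fix such an atom $b$ on $Q$ and set $k:=\lfloor n(\tfrac{1}{p}-1)\rfloor$. Applying the hypothesis at the center $x$ of $Q$ produces $x'\in\Omega^{c}$ with $|x-x'|\lesssim l(Q)$ and $d(x',\Omega)\gtrsim l(Q)$, so the ball $B_Q$ of radius $\tfrac{1}{2}d(x',\Omega)$ centered at $x'$ lies entirely in $\Omega^{c}$, has radius $\approx l(Q)$, and sits at distance $\lesssim l(Q)$ from $Q$. Let $Q^{*}$ be the smallest cube containing $Q\cup B_Q$, with center $x^{*}$ and side length $\approx l(Q)$ (constants depending only on $a$ and $n$), and define
\[
E(b):=b+P_{Q}\chi_{B_Q},
\]
where $P_{Q}\in\pk$ is uniquely determined by the requirement that the moments of $E(b)$ against $(x-x^{*})^{\beta}$ vanish for every $|\beta|\le k$. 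This is a linear system of size $\dim\pk$; after the rescaling $y=(x-x^{*})/l(Q^{*})$ it becomes the Gram matrix of monomials on a normalized ball $\widetilde{B}_Q$ of radius bounded below inside a fixed bounded region, and as the family of admissible $\widetilde{B}_Q$ is compact in the space of positive-volume balls, this Gram matrix is uniformly invertible.

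A short dimensional-analysis argument then gives the atom estimate. Since $\|b\|_{1}\le|Q|^{1-1/p}\approx l(Q)^{n(1-1/p)}$ and $|x-x^{*}|\lesssim l(Q)$ on $Q^{*}$, the right-hand side $-\int b(x)(x-x^{*})^{\beta}dx$ of the linear system is $\lesssim l(Q)^{n(1-1/p)+|\beta|}$. Combining this with the uniform bound on the inverse of the rescaled Gram matrix yields $|c_{\alpha}|\lesssim l(Q)^{-n/p-|\alpha|}$ for the coefficients of $P_{Q}$ about $x^{*}$, whence $\|P_{Q}\chi_{B_Q}\|_{\infty}\lesssim|Q|^{-1/p}\approx|Q^{*}|^{-1/p}$. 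Thus $E(b)$ is supported in $Q^{*}$, satisfies $\|E(b)\|_{\infty}\lesssim|Q^{*}|^{-1/p}$, and has vanishing moments up to order $k$, so it is a uniformly bounded multiple of a classical $\hprn$-atom; hence $\|E(b)\|_{\hprn}\lesssim 1$. Linearity of $b\mapsto E(b)$ is automatic since $b\mapsto P_{Q}$ is the composition of the linear moment map with a fixed linear inverse.

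The main technical step is the uniform invertibility of the rescaled Gram matrix: one must check that the normalized parameters $\mathrm{rad}(B_Q)/l(Q^{*})$ and $(x'-x^{*})/l(Q^{*})$ range over a compact set of admissible configurations governed only by the constant $a$ of the hypothesis, which then gives a uniform lower bound on the determinant. A secondary and standard point is to promote $E$, initially defined only on atoms, to a genuine bounded linear operator on all of $\hpo$; this can be achieved by composing with any linear atomic decomposition map (for instance the one produced by applying a Calder\'on--Zygmund decomposition to the grand maximal function), or by first defining $E$ on the dense subspace $\cinftyc{\Omega}\cap\hpo$ and extending by continuity.
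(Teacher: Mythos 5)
Your proof is correct. Note, however, that the paper does not prove this theorem at all---it is quoted from Miyachi's work---so the relevant comparison is with the paper's proof of sufficiency for Theorem \ref{theorem1}, which must get by with strictly weaker hypotheses. Your reduction to extending individual $(p,\Omega)$-atoms is exactly the paper's Lemma \ref{lemma0}, and your atom-level construction is the clean special case available under Miyachi's strong condition: since $\Omega^c$ contains a full ball $B_Q$ of radius $\approx l(Q)$ at distance $\lesssim l(Q)$ from each Whitney cube $Q$, an $L^{\infty}$ polynomial correction $P_Q\chi_{B_Q}$ can absorb all the moments, with coefficients controlled by the uniform invertibility of the Gram matrix of monomials on the normalized ball; the output is a genuine $p$-atom and nothing more is needed. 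Under the paper's conditions \eqref{h1cc} and \eqref{key}, the set $\Omega^c\cap B(x,ad(x))$ need only have positive relative measure ($p=1$) or positive relative width ($p<1$)---in the latter case it may have Lebesgue measure zero---so the paper replaces your polynomial-on-a-ball by a constant on $\Omega^c\cap B$, or by a linear combination of derivatives of Dirac masses at points of $\Omega^c$, which forces it to develop the Markov-type interpolation Lemmas \ref{ckhyperlemma}--\ref{interpolationlemma} in place of your Gram-matrix argument and Lemma \ref{atomlemma} in place of the classical bound on atoms. One point you should tighten: Miyachi's decomposition (Theorem \ref{miyachi}) yields both $p$-atoms $f_i$ (to be extended by zero) and $(p,\Omega)$-atoms $g_j$, and only the $g_j$ depend linearly on $f$; a Calder\'on--Zygmund-type atomic decomposition is not linear, so the linearity of the global operator should be obtained as in Lemma \ref{lemma0}, by writing $\sum\lambda_i\tilde{f_i}=(f-\sum\mu_jg_j)\chi_{\Omega}$ for compactly supported $f$ and then extending by density.
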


	We continue this section by recalling definitions of two types of atoms. 
	The first kind are the usual atoms appearing in the atomic decomposition of $\hprn$.
	\begin{definition}
		A bounded function $f$ is called a $p$-atom, if there exists a ball $B$ (the supporting ball of $f$) such that $\text{supp}(f)\subset B$, $\infn{f}\le |B|^{-\frac{1}{p}}$ (size condition), and $\left\langle f , P \right\rangle =0$ for all polynomials with degree no more than $n(\frac{1}{p}-1)$ (cancellation condition).
	\end{definition}
	However, there's another type of atom which appears in the atomic decomposition of $\hpo$. These atoms do not need to have any kind of cancellations and are called $(p,\Omega)$-atoms.
	\begin{definition}
		Let $\Omega$ be a proper open subset of $\rn$. We say that a bounded function $g$ is a $(p,\Omega)$-atom if it is supported on some Whitney cube $Q$ of $\Omega$, and $\infn{g}\le |Q|^{-\frac{1}{p}}$.
	\end{definition}
	In \cite{MR1060724}, it's shown that such atoms are elements of $\hpo$ and are uniformly bounded in this space. After fixing these definitions let us recall the atomic decomposition theorem for Hardy spaces on domains.
	
	\begin{theorem}[Miyachi]\label{miyachi}
		Let $\Omega$ be a proper open subset of $\rn$, $0<p\le 1$ and $f\in \hpo$, then there exist sequences of nonnegative numbers $\{\lambda_i\} , \{\mu_j\}$, a sequence of $p$-atoms $\{f_i\}$, and a sequence of 
		$(p, \Omega)$-atoms $\{g_j\}$, such that the latter is depended linearly on $f$, each $f_i$ is supported in $\Omega$, and 
		\begin{equation}\label{miachidecomposition}
			f=\sum \lambda_i f_i + \sum \mu_j g_j \quad \text{with} \quad \hpon{f} \approx \left(\sum \lambda^p_i +\sum \mu^p_j\right)^{\frac{1}{p}}.
		\end{equation}
	\end{theorem}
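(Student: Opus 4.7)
The plan is to adapt the Fefferman--Stein grand-maximal construction of the atomic decomposition of $\hprn$ to the localized setting inside $\Omega$, starting from the level sets of $\mom(f)$. The single new obstruction is that Calder\'on--Zygmund cubes sitting too close to $\partial\Omega$ cannot be enlarged inside $\Omega$, so no cancellation can be created there and the associated local piece must be kept as a $(p,\Omega)$-atom rather than a $p$-atom.

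For $\ell\in\mathbb{Z}$ set $\Omega_\ell=\{x\in\Omega:\mom(f)(x)>2^\ell\}$, a decreasing family of open subsets of $\Omega$ with $\sum_\ell 2^{p\ell}|\Omega_\ell|\lesssim\hpon{f}^p$. Whitney-decompose each $\Omega_\ell$ into cubes $\{Q^\ell_j\}$ with $l(Q^\ell_j)\approx \dist{Q^\ell_j,\Omega_\ell^c}$ and pick a subordinate smooth partition of unity $\{\eta^\ell_j\}$ with $|\partial^\alpha\eta^\ell_j|\lesssim l(Q^\ell_j)^{-|\alpha|}$. Fix a large $C=C(n,p)$ and set $s=\lfloor\hpindex\rfloor$. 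Call $Q^\ell_j$ \emph{interior} if $\dist{Q^\ell_j,\Omega^c}\ge Cl(Q^\ell_j)$ (so $CQ^\ell_j\subset\Omega$), and \emph{boundary} otherwise, in which case $Q^\ell_j$ lies in a Whitney cube $W\in\mathcal{W}(\Omega)$ of comparable side length. For interior cubes proceed classically: define the projection $P^\ell_j\in\pk$ (with $k=s$) by $\int(f-P^\ell_j)\eta^\ell_j q\,dx=0$ for every $q\in\pk$ and telescope across $\ell$ to produce bad pieces $b^\ell_j$; after normalization by a multiple of $2^\ell|Q^\ell_j|^{1/p}$, each $b^\ell_j$ is a $p$-atom supported in $CQ^\ell_j\subset\Omega$ with cancellation through order $s$. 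For boundary cubes simply keep the raw local piece $f\eta^\ell_j$: the bound $\infn{f\eta^\ell_j}\lesssim 2^\ell$ follows from the fact that $f$ is controlled by $\mom(f)\lesssim 2^\ell$ on $\Omega\setminus\Omega_{\ell+1}$ (the definition of $\mom$ allowing test functions $\varphi_t$ with $t<\dist{\cdot,\Omega^c}$ is crucial here), so a suitable rescaling is a $(p,\Omega)$-atom supported in the Whitney cube $W\in\mathcal{W}(\Omega)$. The mass bound $\sum 2^{p\ell}|Q^\ell_j|\lesssim\sum_\ell 2^{p\ell}|\Omega_\ell|\lesssim\hpon{f}^p$ controls $\sum\lambda_i^p+\sum\mu_j^p$, and the converse $\hpon{f}\lesssim(\sum\lambda_i^p+\sum\mu_j^p)^{1/p}$ follows from the uniform $\hpo$-boundedness of both atom classes, the $(p,\Omega)$-atom version being the result of \cite{MR1060724} restated in the excerpt.

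The main delicate point is the linearity of the sequence $\{g_j\}$ in $f$: the Whitney cubes $Q^\ell_j$ and the partition $\{\eta^\ell_j\}$ depend nonlinearly on $f$ through the level sets $\Omega_\ell$, so the extraction of boundary pieces cannot be tied to the Fefferman--Stein machinery itself. The remedy is to detach it entirely and perform it through a \emph{fixed}, $f$-independent partition of unity $\{\psi_W\}_{W\in\mathcal{W}(\Omega)}$ subordinate to a mild dilation of the Whitney cubes of $\Omega$; the $(p,\Omega)$-atom part of the decomposition is then obtained by suitably rescaling the pieces $f\psi_W$ (together with their polynomial averages, when those are needed to absorb non-cancellation), which depends linearly on $f$, and the interior remainder lives well inside $\Omega$ and is decomposed into genuine $p$-atoms by the standard Fefferman--Stein procedure. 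Verifying that this decoupling preserves the correct size estimates and that the interior remainder truly carries moment cancellation through order $s=\lfloor\hpindex\rfloor$ is the principal technical step of the proof.
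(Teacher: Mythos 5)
First, note that the paper does not prove this theorem at all: it is quoted from Miyachi \cite{MR1060724}, and the only indication of its proof is the sentence following the statement, namely that one fixes a Whitney decomposition of $\Omega$ \emph{once and for all}, obtains the $g_j$ by projecting $f$ onto polynomials localized on those Whitney cubes (whence their linear dependence on $f$), and decomposes the remainder into $p$-atoms. Your final paragraph arrives at essentially this strategy, but the road there contains two concrete errors. In the first construction, the boundary pieces $f\eta^\ell_j$ are not $(p,\Omega)$-atoms: the cube $Q^\ell_j$ is a Whitney cube of $\Omega_\ell=\{\mom(f)>2^\ell\}$ and so lies \emph{inside} $\Omega_\ell$, possibly entirely inside $\Omega_{\ell+1}$, where no upper bound on $f$ is available; the control $\mom(f)\le 2^{\ell+1}$ holds only off $\Omega_{\ell+1}$, and for $p<1$ the distribution $f$ need not be a function at all on $Q^\ell_j$. (The classical construction avoids exactly this by forming the good parts $g_\ell$ via polynomial projections and telescoping $g_{\ell+1}-g_\ell$; the raw localized pieces are never bounded.) Moreover, keeping the raw piece $f\eta^\ell_j$ at \emph{every} level $\ell$ destroys the telescoping, so the proposed sum does not reconstruct $f$.

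The repaired scheme in your last paragraph is the right one, but it still misidentifies the $(p,\Omega)$-atoms: they cannot be rescalings of $f\psi_W$ (same unboundedness problem), they must be the polynomial parts $P_W\psi_W$, where $P_W\in\pk$ is defined by $\int (f-P_W)\psi_W\,q=0$ for $q\in\pk$ and satisfies $\|P_W\|_{L^\infty}\lesssim \inf_{W^*}\mom(f)$ --- this last estimate is what makes them bounded, makes $\sum\mu_j^p\lesssim\hpon{f}^p$, and makes them linear in $f$. What then remains, and what you explicitly defer as ``the principal technical step,'' is the actual heart of Miyachi's argument: showing that the zero-extension of the remainder $\sum_W(f-P_W)\psi_W$ belongs to $\hprn$ with norm $\lesssim\hpon{f}$ (this uses the local cancellation of each piece to control the grand maximal function across $\partial\Omega$), and that its classical atomic decomposition can be arranged so that the resulting $p$-atoms are supported in $\Omega$. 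Without that step the proposal is an outline of the statement rather than a proof; as it stands it has a genuine gap.
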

	In the above, first a suitable Whitney decomposition of the domain is fixed and then roughly speaking, $g_j$s are obtained by projecting $f$ onto the space of polynomials localized on Whitney cubes of $\Omega$. This is why $g_j$ depends linearly on $f$, but this is not the case for $f_j$s. Also, it follows from the construction that if $f$ is compactly supported in $\Omega$, then the support of all $f_js$ and $g_js$ are contained in a probably larger compact subset of $\Omega$ \cite{MR1060724}. It should be noted here that even if $f$ is a $(p,\Omega)-$atom, the above decomposition gives us some $p$-atoms and $(p,\Omega)$-atoms which are probably different from $f$.\\
	
	In \cite{MR1060724}, by using this powerful theorem, the extension problem for distributions in $\hpo$ was reduced to the especial case of extending $(p,\Omega)$-atoms. Although the following lemma is used there, but it's not stated, so we decided to state it here as a lemma.
	\begin{lemma}\label{lemma0}
		For an open set $\Omega$ to be an $H^p$ extension domain it is necessary and sufficient that for every $(p,\Omega)$-atom $g$ there exists an extension $\tilde{g} \in \hprn$ such that $\hprnn{\tilde{g}} \lesssim 1$. Moreover, if this extension is linear in terms of $g$ then there is a bounded linear extension operator on $\hpo$.
	\end{lemma}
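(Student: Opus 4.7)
\emph{Necessity} is immediate: if $E\colon\hpo\to\hprn$ is any bounded extension operator, then for every $(p,\Omega)$-atom $g$ one has $\hprnn{E(g)}\le\|E\|\,\hpon{g}\lesssim 1$, since, as recalled from \cite{MR1060724}, $(p,\Omega)$-atoms are uniformly bounded in $\hpo$.

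For \emph{sufficiency}, the plan is to handle the atomic decomposition of Theorem \ref{miyachi} piece by piece. Given $f\in\hpo$, write
\[
f=\sum_i\lambda_if_i+\sum_j\mu_jg_j,\qquad\sum_i\lambda_i^p+\sum_j\mu_j^p\approx\hpon{f}^p.
\]
Each $f_i$ is already a $p$-atom in $\rn$ whose support happens to lie in $\Omega$, so it belongs to $\hprn$ with $\hprnn{f_i}\lesssim 1$ at no cost. For each $(p,\Omega)$-atom $g_j$ the hypothesis provides an extension $\widetilde{g_j}$ with $\hprnn{\widetilde{g_j}}\lesssim 1$. Then set
\[
\widetilde{f}:=\sum_i\lambda_if_i+\sum_j\mu_j\widetilde{g_j}.
\]
The $p$-subadditivity of $\hprnn{\cdot}^p$ (valid since $0<p\le 1$) yields $\hprnn{\widetilde{f}}^p\lesssim\sum_i\lambda_i^p+\sum_j\mu_j^p\approx\hpon{f}^p$, and the series converges in $\hprn$ to a distribution whose restriction to $\Omega$ is $f$.

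For the \emph{linear} part of the statement, suppose that $g\mapsto\widetilde{g}$ is linear on the span of $(p,\Omega)$-atoms. The construction underlying Theorem \ref{miyachi} provides the polynomial piece $\sum_j\mu_jg_j$ as a linear function of $f$, because it is built from local polynomial projections relative to a fixed Whitney decomposition of $\Omega$; consequently $\sum_j\mu_j\widetilde{g_j}$ is also linear in $f$. The complementary piece $\sum_i\lambda_if_i=f-\sum_j\mu_jg_j$ is then linear in $f$ as a distribution on $\Omega$, and viewing it as an element of $\hprn$ (supported in $\Omega$) is a bounded linear operation by the estimate above. Adding the two yields the desired bounded linear extension operator.

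The single substantive observation is that $p$-atoms whose support already lies in $\Omega$ \emph{are automatically $p$-atoms in $\rn$}: their moment conditions are preserved trivially when they are viewed as distributions on $\rn$. This reduces the whole extension problem to the cancellation-free $(p,\Omega)$-atoms, which is precisely where all of the geometry of $\Omega^c$ must enter in the main theorem. Everything else reduces to standard manipulations of the $\hprn$ quasi-norm via $p$-subadditivity, and there is no genuine obstacle to overcome at this stage.
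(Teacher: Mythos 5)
Your necessity and sufficiency arguments are exactly the paper's: decompose via Theorem \ref{miyachi}, note that the $f_i$ are genuine $p$-atoms supported in $\Omega$ so their zero extensions cost nothing, extend the $g_j$ by hypothesis, and sum using $p$-subadditivity. That part is correct and needs no further comment.

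The one place you are too quick is the linearity claim for the piece $\sum_i\lambda_i f_i$. You write that it equals $f-\sum_j\mu_jg_j$, hence is linear in $f$ as a distribution on $\Omega$, and that ``viewing it as an element of $\hprn$ (supported in $\Omega$) is a bounded linear operation by the estimate above.'' But there is no canonical zero extension of a general distribution on $\Omega$ to $\rn$, and the concrete element of $\hprn$ you actually produce is $\sum_i\lambda_i\tilde{f_i}$, built from $p$-atoms $f_i$ and coefficients $\lambda_i$ that are \emph{not} linear in $f$ (only the $g_j$-part of Miyachi's decomposition is). So a priori this assignment could depend on the chosen decomposition, and your estimate only bounds its quasi-norm; it does not show the map is well defined as a function of $f$, let alone linear. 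The paper closes this gap by first restricting to compactly supported $f\in\hpo$: then the supports of all the $f_i$ and $g_j$ lie in a common compact subset of $\Omega$, so $f-\sum_j\mu_jg_j$ is a compactly supported distribution in $\Omega$, its zero extension $(f-\sum_j\mu_jg_j)\chi_{\Omega}$ is canonically defined and manifestly linear, and it coincides with $\sum_i\lambda_i\tilde{f_i}$. The resulting operator is then densely defined and bounded, hence extends uniquely to all of $\hpo$. You should add this compact-support-plus-density step; without it the linearity assertion is not justified.
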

	\begin{proof}
		The necessity of the above condition follows directly from the definition of extension domains and the fact that $(p,\Omega)$-atoms have bounded norms.\\
		For the sufficiency, take a distribution $f \in \hpo$, decompose it into atoms as in Theorem \ref{miyachi} and extend it in the following way:
		
		\[
		\tilde{f} =\sum \lambda_i \tilde{f_i} + \sum \mu_j \tilde{g_j},
		\]
		where $\tilde{f_i}$ is the extension of $f_i$ by considering it to be zero outside of $\Omega$. Since $\text{supp}(f_i)\subset \Omega$, $\tilde{f}$ is a well-defined distribution which belong to $\hprn$ and its norm is bounded by	
		\[
		\hprnn{\tilde{f}}^p \le \sum \lambda^p_i \hprnn{\tilde{f_i}}^p + \sum \mu^p_j \hprnn{\tilde{g_j}}^p \lesssim \sum \lambda^p_i +\sum \mu^p_j\lesssim\hpon{f}^p.
		\]		
		So this defines a bounded extension operator on $\hpo$. Whenever the extension of $(p,\Omega)$-atoms is linear, the above operator is also linear on compactly supported distributions in $\hpo$. To see this, we note that $\sum \mu_j \tilde{g_j}$ is linear in terms of $f$ and $\sum \lambda_i \tilde{f_i}=(f-\sum \mu_jg_j)\chi_{\Omega}$, which is true because the support of $f$, $f_i$s and $g_j$s all are contained in a compact subset of $\Omega$. This implies that $\sum \lambda_i \tilde{f_i}$ is also linear in $f$. Now, the operator $f\to \tilde{f}$ is densely defined and bounded, so it has a unique extension to  $\hpo$, which is the desired linear extension operator.
		
	\end{proof}

	\section{Main result}	
	\begin{definition}
		Let $E\subset \rn$ and $\nu$ be a unit vector, then the width of $E$ in direction $\nu$, $w(E,\nu)$, is defined as the smallest number $\delta>0$ such that $E$ lies entirely between two parallel hyperplanes with normal $\nu$ and of distance $\delta$ from each other. Also we define the width of $E$, $w(E)$, to be
		\[
		w(E):=\inf_{|\nu|=1}w(E,\nu).
		\]
		When $n=1$, by $w(E)$ we simply mean the diameter of $E$.
	\end{definition}	
	
	\begin{theorem}\label{theorem1}
		Let $0<p\le 1$ then a proper open subset $\Omega\subset \rn$ is an $H^p$ extension domain if and only if
		\begin{itemize}
			\item 
			
			For $p=1$ there exist constants $a>1$ and $\delta>0$, such that for every $x \in \Omega$

			\begin{equation}\label{h1cc}
				\frac{|\Omega^c\cap B(x,a\dist{x})|}{|B(x,a\dist{x})|}>\delta.
			\end{equation}

			\item
			For $p=\frac{n}{k+n}$, $k=1,2,...$, there exist positive constants $a>1$ and $\delta>0$ such that for every $x \in \Omega$,

			\begin{equation}\label{key}
				\frac{w(\Omega^c\cap B(x,ad(x)))}{d(x)}>\delta.
			\end{equation}

			\item
			And if $p\neq \frac{n}{k+n}$, $k=0,1,2,3...$ then every proper open subset is an extension domain.
		\end{itemize}
		
		Moreover in all cases the extension operator can be taken to be linear.
	\end{theorem}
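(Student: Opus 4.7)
The plan is to invoke Lemma \ref{lemma0}, which reduces the whole characterization to extending a single $(p,\Omega)$-atom $g$ supported on a Whitney cube $Q$ in a bounded and linear manner. In each case I would build an explicit linear extension for sufficiency, and construct an obstruction in $\bmorn$ or $\hlip{k}{n}$ for necessity.

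For sufficiency in the cases $p=1$ and $p=\frac{n}{k+n}$, following the hint given in the introduction, I set $\tilde g=g+h$, where $h$ is a correction supported in $\Omega^c\cap B(x_Q,a\,d(x_Q))$ chosen so that $\tilde g$ annihilates all polynomials of degree at most $n(\frac{1}{p}-1)$. Since the enlarged ball has volume comparable to $|Q|$, a bound $\|h\|_\infty\lesssim \|g\|_\infty$ will turn $\tilde g$ into a bounded multiple of a genuine $p$-atom supported on $B(x_Q,a\,d(x_Q))$. When $p=1$ only $\int \tilde g=0$ is needed, so $h$ is a constant multiple of $\chi_{\Omega^c\cap B(x_Q,a\,d(x_Q))}$, and \eqref{h1cc} directly supplies the bound. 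When $p=\frac{n}{k+n}$ one needs all moments up to order $k$ to vanish; I take $h$ as a linear combination of smooth bumps concentrated near $n_k:=\dim\pk$ points $x_1,\dots,x_{n_k}\in\Omega^c\cap B(x_Q,a\,d(x_Q))$, reducing the problem to the polynomial interpolation system
\[
\sum_{i=1}^{n_k} c_i\,p(x_i)=\int_Q g(x)\,p(x)\,dx\qquad(p\in\pk).
\]
The width condition \eqref{key} is used to select the $x_i$ in sufficiently general position so that the matrix $[p_\alpha(x_i)]$ has its smallest singular value controlled from below uniformly in $Q$; after translating and rescaling, this becomes the scale-invariant statement that every set of width $\geq\delta$ in $B(0,1)$ admits $n_k$ well-conditioned interpolation nodes. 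For the third case $n(\frac{1}{p}-1)\notin\mathbb{N}\cup\{0\}$, set $\gamma:=n(\frac{1}{p}-1)$ and take $\tilde g$ to be $g$ extended by zero. Here duality against $\hlip{\gamma}{n}$ gives the $\hp$ bound: for any $f\in\hlip{\gamma}{n}$, its Taylor polynomial $P$ at $x_Q$ of degree $\lfloor\gamma\rfloor$ approximates $f$ on $Q$ with error $\lesssim \|f\|_{\hlip{\gamma}{n}}|x-x_Q|^{\gamma}$, since genuine H\"older continuity of fractional order for the top-order derivatives is available precisely because $\gamma\notin\mathbb{N}$; pairing with $g$ then yields a bound $\lesssim \|f\|_{\hlip{\gamma}{n}}|Q|^{-1/p}|Q|^{1+\gamma/n}=\|f\|_{\hlip{\gamma}{n}}$, independent of $Q$.

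For necessity I invert the above scheme. If \eqref{h1cc} fails along a sequence of points of $\Omega$, a carefully chosen smooth function built from those points yields an element of $\bmorn$ of bounded norm whose pairing with the atoms $|Q_j|^{-1}\chi_{Q_j}$ blows up, contradicting the atomic extension in Lemma \ref{lemma0}. The parallel construction for $p=\frac{n}{k+n}$ produces a function in $\hlip{k}{n}$ by exploiting that failure of \eqref{key} forces $\Omega^c$ near $Q_j$ into a slab of width $o(d(x_{Q_j}))$; a polynomial in the direction normal to this slab then vanishes on $\Omega^c$, has small Zygmund seminorm, and is large on $Q_j$. The main obstacle I foresee is the quantitative polynomial-interpolation step in the sufficiency half of the $p=\frac{n}{k+n}$ case: given only the width lower bound, $n_k$ nodes inside $\Omega^c\cap B(x_Q,a\,d(x_Q))$ must be produced whose Lebesgue constant is bounded independently of $Q$. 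I plan to establish this by induction on $n$, peeling off one coordinate direction at a time using that any set of width $\geq\delta$ has a projection of diameter $\geq\delta$ onto some hyperplane, and reducing in the base case to the one-dimensional Vandermonde bound for $k+1$ well-separated nodes in $[-1,1]$.
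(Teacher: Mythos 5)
Your overall architecture (reduce to $(p,\Omega)$-atoms via Lemma \ref{lemma0}, correct on $\Omega^c$ for sufficiency, build dual obstructions for necessity) matches the paper, but the two central steps of your sufficiency argument would fail. First, in the case $p=\frac{n}{n+k}$ you propose a correction $h$ made of \emph{smooth bumps} supported in $\Omega^c\cap B(x_Q,ad(x_Q))$. Condition \eqref{key} is compatible with $|\Omega^c|=0$ (the paper's Cantor-dust example is exactly such a domain), in which case every locally integrable $h$ supported on $\Omega^c$ is the zero distribution and no cancellation can be created this way; the correction must be genuinely distributional. Second, even replacing bumps by point masses, your interpolation system $\sum_i c_i\,p(x_i)=\int g\,p$ for $p\in\pk$ is not solvable in general: \eqref{key} does not prevent $\Omega^c\cap B(x_Q,ad(x_Q))$ from lying inside the zero set of a nonzero element of $\pk$ (for $n=k=2$, a cross $\{xy=0\}$ has width bounded below, yet $xy$ vanishes identically on it), so the matrix $[p_\alpha(x_i)]$ is singular for \emph{every} choice of nodes in $\Omega^c$ and the moment against $xy$ can never be matched. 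This is precisely why the paper works with Hermite data: the reverse Markov inequality (Lemma \ref{ckhyperlemma}) controls $\|P\|_{L^\infty(B)}$ by the $C^{k-1}(E)$ norm — derivatives up to order $k-1$ at points of $E$ — and the correction is $\sum c_{x,\beta}\partial^\beta\delta_x$ with $|\beta|\le k-1$, which also respects the bound on the order of distributions in $\hp$. Your third case is also wrong as stated: the zero extension of a nonnegative $(p,\Omega)$-atom has nonzero integral, hence is \emph{not} in $\hprn$ no matter what $\gamma$ is, and duality against $\hlip{\gamma}{n}$ can bound the norm of an element already known to lie in $\hp$ but cannot establish membership ($\hp$, $p<1$, is not reflexive). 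The paper still adds $\partial^\beta\delta_x$, $|\beta|\le[\gamma]$, at a single point of $\Omega^c$; the point is that for $\gamma\notin\mathbb{N}$ the resulting (triangular) system is always solvable, whereas for $\gamma=k$ one is short exactly the top-order derivatives, which is where \eqref{key} enters.

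On necessity, your $p=1$ sketch is in the right spirit (the paper realizes it via Uchiyama's theorem, producing $f\in\bmorn$ equal to $1$ on the atom's ball, $0$ on $\Omega^c$, with norm $O(1/\log)$), but for $p<1$ your proposed obstruction --- ``a polynomial in the direction normal to the slab'' --- cannot work: a polynomial of degree $\le k$ has zero $\hlip{k}{n}$ seminorm and pairs to zero with every element of $\hprn$ (which annihilates $\pk$), so it yields no contradiction; and no nonzero polynomial vanishes on a slab of positive thickness, which is what you need to guarantee $\langle\tilde g_j,f\rangle=\int_{Q_j}g_jf$ without any control on where the extension lives inside $\Omega^c$. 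The paper instead builds a genuinely non-polynomial element of $\hlip{k}{1}$ by $k$-fold integration of a logarithmic BMO function followed by cutoffs (Lemmas \ref{intlemma}--\ref{lip1lemma}), vanishing identically on the slab and outside a large ball, with seminorm $O(1/\log)$.
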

	
	In the above, the condition \eqref{h1cc} is stronger than \eqref{key} because if \eqref{h1cc} holds and $\Omega^c\cap B(x,ad(x))$ lies between two parallel hyperplanes of distance $\delta'd(x)$, then $$|\Omega^c\cap B(x,ad(x))|\le \delta'd(x)(2ad(x))^{n-1},$$ which implies that $\delta'\ge2^{1-n}c_n a\delta $, where $c_n$ is the volume of the unit ball of $\rn$. So \eqref{key} holds with the same $a$ and $\delta$ replaced by $2^{1-n}c_n a\delta$. Also the example below shows that these two conditions are different.\\ 
	
	For a closed unit cube $Q=I_1\times...\times I_n$ in $\rn$, let $C_j$ be the usual Cantor middle third subset of $I_j$ and call $C_1\times...\times C_n$ the Cantor dust in $Q$. Now consider the grid of unit cubes in $\rn$ with vertices on $\mathbb{Z}^n$, and from each cube in this grid remove its Cantor dust and call the remaining set $\Omega$. For $n\ge 2$, $\Omega$ is connected and open with $|\Omega^c|=0$ so it doesn't satisfy \eqref{h1cc} for any choice of $a$ and $\delta$ but it satisfies \eqref{key}. To see this take $x=(x_1,...,x_n)\in \Omega$ then it lies in some cube $Q$ of the grid and without the loss of generality assume $Q=[0,1]^n$. Now suppose $d(x,\Omega^c)\approx \max_{1\le j\le n}|x_j-y_j|=|x_1-y_1|$ for some $y=(y_1,...,y_n)$ with all $y_j\in C$, where $C$ is the Cantor set in $[0,1]$. From the ternary expansion of $x_j$ and $y_j$, it's clear that $|x_j-y_j|\approx 3^{-k_j}$ where $k_j$ is the first place that their digits are different. So if in the expansion of $y_j$ we change the $k_1$-th digit from 0 to 2 or from 2 to 0, we get another point $y_j'$ with $|y_j-y_j'|\approx 3^{-k_1}$. Then note that the cube $Q'=[y_1,y_1']\times...\times [y_j,y_j']$ has all its vertices on $\Omega^c$ and $d(x,Q') \approx d(x,\Omega^c)\approx l(Q')$. This shows that in every direction, the width of $\Omega^c$ in a dilation of $B(x,d(x,\Omega^c))$ is comparable to $d(x,\Omega^c)$ and \eqref{key} holds (see figure below).
	\begin{figure}[h]
		\definecolor{zzttqq}{rgb}{0.6,0.2,0}
		\definecolor{uququq}{rgb}{0.25,0.25,0.25}
		\begin{tikzpicture}[line cap=round,line join=round,>=triangle 45,x=0.17cm,y=0.17cm]
			\clip(-18.89,-4.46) rectangle (46.74,29.92);
			\fill[color=zzttqq,fill=zzttqq,fill opacity=0.1] (6,21) -- (6,20) -- (7,20) -- (7,21) -- cycle;
			\fill[color=zzttqq,fill=zzttqq,fill opacity=0.1] (8,21) -- (9,21) -- (9,20) -- (8,20) -- cycle;
			\fill[color=zzttqq,fill=zzttqq,fill opacity=0.1] (7,19) -- (7,18) -- (6,18) -- (6,19) -- cycle;
			\fill[color=zzttqq,fill=zzttqq,fill opacity=0.1] (18,19) -- (19,19) -- (19,18) -- (18,18) -- cycle;
			\fill[color=zzttqq,fill=zzttqq,fill opacity=0.1] (21,21) -- (20,21) -- (20,20) -- (21,20) -- cycle;
			\fill[color=zzttqq,fill=zzttqq,fill opacity=0.1] (24,21) -- (24,20) -- (25,20) -- (25,21) -- cycle;
			\fill[color=zzttqq,fill=zzttqq,fill opacity=0.1] (26,21) -- (26,20) -- (27,20) -- (27,21) -- cycle;
			\fill[color=zzttqq,fill=zzttqq,fill opacity=0.1] (24,19) -- (24,18) -- (25,18) -- (25,19) -- cycle;
			\fill[color=zzttqq,fill=zzttqq,fill opacity=0.1] (26,19) -- (26,18) -- (27,18) -- (27,19) -- cycle;
			\fill[color=zzttqq,fill=zzttqq,fill opacity=0.1] (24,27) -- (24,26) -- (25,26) -- (25,27) -- cycle;
			\fill[color=zzttqq,fill=zzttqq,fill opacity=0.1] (26,27) -- (26,26) -- (27,26) -- (27,27) -- cycle;
			\fill[color=zzttqq,fill=zzttqq,fill opacity=0.1] (24,25) -- (24,24) -- (25,24) -- (25,25) -- cycle;
			\fill[color=zzttqq,fill=zzttqq,fill opacity=0.1] (26,25) -- (26,24) -- (27,24) -- (27,25) -- cycle;
			\fill[color=zzttqq,fill=zzttqq,fill opacity=0.1] (21,27) -- (21,26) -- (20,26) -- (20,27) -- cycle;
			\fill[color=zzttqq,fill=zzttqq,fill opacity=0.1] (19,26) -- (18,26) -- (18,27) -- (19,27) -- cycle;
			\fill[color=zzttqq,fill=zzttqq,fill opacity=0.1] (18,25) -- (18,24) -- (19,24) -- (19,25) -- cycle;
			\fill[color=zzttqq,fill=zzttqq,fill opacity=0.1] (20,25) -- (20,24) -- (21,24) -- (21,25) -- cycle;
			\fill[color=zzttqq,fill=zzttqq,fill opacity=0.1] (8,19) -- (8,18) -- (9,18) -- (9,19) -- cycle;
			\fill[color=zzttqq,fill=zzttqq,fill opacity=0.1] (19,21) -- (18,21) -- (18,20) -- (19,20) -- cycle;
			\fill[color=zzttqq,fill=zzttqq,fill opacity=0.1] (20,19) -- (20,18) -- (21,18) -- (21,19) -- cycle;
			\fill[color=zzttqq,fill=zzttqq,fill opacity=0.1] (1,9) -- (0,9) -- (0,8) -- (1,8) -- cycle;
			\fill[color=zzttqq,fill=zzttqq,fill opacity=0.1] (1,7) -- (0,7) -- (0,6) -- (1,6) -- cycle;
			\fill[color=zzttqq,fill=zzttqq,fill opacity=0.1] (2,9) -- (2,8) -- (3,8) -- (3,9) -- cycle;
			\fill[color=zzttqq,fill=zzttqq,fill opacity=0.1] (2,7) -- (2,6) -- (3,6) -- (3,7) -- cycle;
			\fill[color=zzttqq,fill=zzttqq,fill opacity=0.1] (6,9) -- (6,8) -- (7,8) -- (7,9) -- cycle;
			\fill[color=zzttqq,fill=zzttqq,fill opacity=0.1] (8,9) -- (8,8) -- (9,8) -- (9,9) -- cycle;
			\fill[color=zzttqq,fill=zzttqq,fill opacity=0.1] (6,7) -- (6,6) -- (7,6) -- (7,7) -- cycle;
			\fill[color=zzttqq,fill=zzttqq,fill opacity=0.1] (8,7) -- (8,6) -- (9,6) -- (9,7) -- cycle;
			\fill[color=zzttqq,fill=zzttqq,fill opacity=0.1] (1,3) -- (0,3) -- (0,2) -- (1,2) -- cycle;
			\fill[color=zzttqq,fill=zzttqq,fill opacity=0.1] (2,3) -- (2,2) -- (3,2) -- (3,3) -- cycle;
			\fill[color=zzttqq,fill=zzttqq,fill opacity=0.1] (1,1) -- (0,1) -- (0,0) -- (1,0) -- cycle;
			\fill[color=zzttqq,fill=zzttqq,fill opacity=0.1] (2,1) -- (2,0) -- (3,0) -- (3,1) -- cycle;
			\fill[color=zzttqq,fill=zzttqq,fill opacity=0.1] (6,3) -- (7,3) -- (7,2) -- (6,2) -- cycle;
			\fill[color=zzttqq,fill=zzttqq,fill opacity=0.1] (8,3) -- (9,3) -- (9,2) -- (8,2) -- cycle;
			\fill[color=zzttqq,fill=zzttqq,fill opacity=0.1] (6,1) -- (6,0) -- (7,0) -- (7,1) -- cycle;
			\fill[color=zzttqq,fill=zzttqq,fill opacity=0.1] (8,1) -- (8,0) -- (9,0) -- (9,1) -- cycle;
			\fill[color=zzttqq,fill=zzttqq,fill opacity=0.1] (18,3) -- (18,2) -- (19,2) -- (19,3) -- cycle;
			\fill[color=zzttqq,fill=zzttqq,fill opacity=0.1] (20,3) -- (20,2) -- (21,2) -- (21,3) -- cycle;
			\fill[color=zzttqq,fill=zzttqq,fill opacity=0.1] (18,1) -- (18,0) -- (19,0) -- (19,1) -- cycle;
			\fill[color=zzttqq,fill=zzttqq,fill opacity=0.1] (20,1) -- (20,0) -- (21,0) -- (21,1) -- cycle;
			\fill[color=zzttqq,fill=zzttqq,fill opacity=0.1] (24,1) -- (24,0) -- (25,0) -- (25,1) -- cycle;
			\fill[color=zzttqq,fill=zzttqq,fill opacity=0.1] (26,1) -- (26,0) -- (27,0) -- (27,1) -- cycle;
			\fill[color=zzttqq,fill=zzttqq,fill opacity=0.1] (27,2) -- (26,2) -- (26,3) -- (27,3) -- cycle;
			\fill[color=zzttqq,fill=zzttqq,fill opacity=0.1] (25,3) -- (24,3) -- (24,2) -- (25,2) -- cycle;
			\fill[color=zzttqq,fill=zzttqq,fill opacity=0.1] (27,6) -- (26,6) -- (26,7) -- (27,7) -- cycle;
			\fill[color=zzttqq,fill=zzttqq,fill opacity=0.1] (27,8) -- (27,9) -- (26,9) -- (26,8) -- cycle;
			\fill[color=zzttqq,fill=zzttqq,fill opacity=0.1] (25,9) -- (25,8) -- (24,8) -- (24,9) -- cycle;
			\fill[color=zzttqq,fill=zzttqq,fill opacity=0.1] (25,7) -- (24,7) -- (24,6) -- (25,6) -- cycle;
			\fill[color=zzttqq,fill=zzttqq,fill opacity=0.1] (20,9) -- (21,9) -- (21,8) -- (20,8) -- cycle;
			\fill[color=zzttqq,fill=zzttqq,fill opacity=0.1] (19,9) -- (18,9) -- (18,8) -- (19,8) -- cycle;
			\fill[color=zzttqq,fill=zzttqq,fill opacity=0.1] (18,7) -- (19,7) -- (19,6) -- (18,6) -- cycle;
			\fill[color=zzttqq,fill=zzttqq,fill opacity=0.1] (20,7) -- (21,7) -- (21,6) -- (20,6) -- cycle;
			\fill[color=zzttqq,fill=zzttqq,fill opacity=0.1] (0,25) -- (1,25) -- (1,24) -- (0,24) -- cycle;
			\fill[color=zzttqq,fill=zzttqq,fill opacity=0.1] (2,25) -- (3,25) -- (3,24) -- (2,24) -- cycle;
			\fill[color=zzttqq,fill=zzttqq,fill opacity=0.1] (1,27) -- (1,26) -- (0,26) -- (0,27) -- cycle;
			\fill[color=zzttqq,fill=zzttqq,fill opacity=0.1] (2,27) -- (2,26) -- (3,26) -- (3,27) -- cycle;
			\fill[color=zzttqq,fill=zzttqq,fill opacity=0.1] (6,27) -- (6,26) -- (7,26) -- (7,27) -- cycle;
			\fill[color=zzttqq,fill=zzttqq,fill opacity=0.1] (8,27) -- (8,26) -- (9,26) -- (9,27) -- cycle;
			\fill[color=zzttqq,fill=zzttqq,fill opacity=0.1] (6,25) -- (6,24) -- (7,24) -- (7,25) -- cycle;
			\fill[color=zzttqq,fill=zzttqq,fill opacity=0.1] (8,25) -- (8,24) -- (9,24) -- (9,25) -- cycle;
			\fill[color=zzttqq,fill=zzttqq,fill opacity=0.1] (1,21) -- (0,21) -- (0,20) -- (1,20) -- cycle;
			\fill[color=zzttqq,fill=zzttqq,fill opacity=0.1] (2,21) -- (2,20) -- (3,20) -- (3,21) -- cycle;
			\fill[color=zzttqq,fill=zzttqq,fill opacity=0.1] (1,19) -- (0,19) -- (0,18) -- (1,18) -- cycle;
			\fill[color=zzttqq,fill=zzttqq,fill opacity=0.1] (2,19) -- (2,18) -- (3,18) -- (3,19) -- cycle;
			\draw (0,27)-- (27,27);
			\draw (27,27)-- (27,0);
			\draw (27,0)-- (0,0);
			\draw (0,0)-- (0,26);
			\draw (0,26)-- (0,27);
			\draw [color=zzttqq] (6,21)-- (6,20);
			\draw [color=zzttqq] (6,20)-- (7,20);
			\draw [color=zzttqq] (7,20)-- (7,21);
			\draw [color=zzttqq] (7,21)-- (6,21);
			\draw [color=zzttqq] (8,21)-- (9,21);
			\draw [color=zzttqq] (9,21)-- (9,20);
			\draw [color=zzttqq] (9,20)-- (8,20);
			\draw [color=zzttqq] (8,20)-- (8,21);
			\draw [color=zzttqq] (7,19)-- (7,18);
			\draw [color=zzttqq] (7,18)-- (6,18);
			\draw [color=zzttqq] (6,18)-- (6,19);
			\draw [color=zzttqq] (6,19)-- (7,19);
			\draw [color=zzttqq] (18,19)-- (19,19);
			\draw [color=zzttqq] (19,19)-- (19,18);
			\draw [color=zzttqq] (19,18)-- (18,18);
			\draw [color=zzttqq] (18,18)-- (18,19);
			\draw [color=zzttqq] (21,21)-- (20,21);
			\draw [color=zzttqq] (20,21)-- (20,20);
			\draw [color=zzttqq] (20,20)-- (21,20);
			\draw [color=zzttqq] (21,20)-- (21,21);
			\draw [color=zzttqq] (24,21)-- (24,20);
			\draw [color=zzttqq] (24,20)-- (25,20);
			\draw [color=zzttqq] (25,20)-- (25,21);
			\draw [color=zzttqq] (25,21)-- (24,21);
			\draw [color=zzttqq] (26,21)-- (26,20);
			\draw [color=zzttqq] (26,20)-- (27,20);
			\draw [color=zzttqq] (27,20)-- (27,21);
			\draw [color=zzttqq] (27,21)-- (26,21);
			\draw [color=zzttqq] (24,19)-- (24,18);
			\draw [color=zzttqq] (24,18)-- (25,18);
			\draw [color=zzttqq] (25,18)-- (25,19);
			\draw [color=zzttqq] (25,19)-- (24,19);
			\draw [color=zzttqq] (26,19)-- (26,18);
			\draw [color=zzttqq] (26,18)-- (27,18);
			\draw [color=zzttqq] (27,18)-- (27,19);
			\draw [color=zzttqq] (27,19)-- (26,19);
			\draw [color=zzttqq] (24,27)-- (24,26);
			\draw [color=zzttqq] (24,26)-- (25,26);
			\draw [color=zzttqq] (25,26)-- (25,27);
			\draw [color=zzttqq] (25,27)-- (24,27);
			\draw [color=zzttqq] (26,27)-- (26,26);
			\draw [color=zzttqq] (26,26)-- (27,26);
			\draw [color=zzttqq] (27,26)-- (27,27);
			\draw [color=zzttqq] (27,27)-- (26,27);
			\draw [color=zzttqq] (24,25)-- (24,24);
			\draw [color=zzttqq] (24,24)-- (25,24);
			\draw [color=zzttqq] (25,24)-- (25,25);
			\draw [color=zzttqq] (25,25)-- (24,25);
			\draw [color=zzttqq] (26,25)-- (26,24);
			\draw [color=zzttqq] (26,24)-- (27,24);
			\draw [color=zzttqq] (27,24)-- (27,25);
			\draw [color=zzttqq] (27,25)-- (26,25);
			\draw [color=zzttqq] (21,27)-- (21,26);
			\draw [color=zzttqq] (21,26)-- (20,26);
			\draw [color=zzttqq] (20,26)-- (20,27);
			\draw [color=zzttqq] (20,27)-- (21,27);
			\draw [color=zzttqq] (19,26)-- (18,26);
			\draw [color=zzttqq] (18,26)-- (18,27);
			\draw [color=zzttqq] (18,27)-- (19,27);
			\draw [color=zzttqq] (19,27)-- (19,26);
			\draw [color=zzttqq] (18,25)-- (18,24);
			\draw [color=zzttqq] (18,24)-- (19,24);
			\draw [color=zzttqq] (19,24)-- (19,25);
			\draw [color=zzttqq] (19,25)-- (18,25);
			\draw [color=zzttqq] (20,25)-- (20,24);
			\draw [color=zzttqq] (20,24)-- (21,24);
			\draw [color=zzttqq] (21,24)-- (21,25);
			\draw [color=zzttqq] (21,25)-- (20,25);
			\draw [color=zzttqq] (8,19)-- (8,18);
			\draw [color=zzttqq] (8,18)-- (9,18);
			\draw [color=zzttqq] (9,18)-- (9,19);
			\draw [color=zzttqq] (9,19)-- (8,19);
			\draw [color=zzttqq] (19,21)-- (18,21);
			\draw [color=zzttqq] (18,21)-- (18,20);
			\draw [color=zzttqq] (18,20)-- (19,20);
			\draw [color=zzttqq] (19,20)-- (19,21);
			\draw [color=zzttqq] (20,19)-- (20,18);
			\draw [color=zzttqq] (20,18)-- (21,18);
			\draw [color=zzttqq] (21,18)-- (21,19);
			\draw [color=zzttqq] (21,19)-- (20,19);
			\draw [color=zzttqq] (1,9)-- (0,9);
			\draw [color=zzttqq] (0,9)-- (0,8);
			\draw [color=zzttqq] (0,8)-- (1,8);
			\draw [color=zzttqq] (1,8)-- (1,9);
			\draw [color=zzttqq] (1,7)-- (0,7);
			\draw [color=zzttqq] (0,7)-- (0,6);
			\draw [color=zzttqq] (0,6)-- (1,6);
			\draw [color=zzttqq] (1,6)-- (1,7);
			\draw [color=zzttqq] (2,9)-- (2,8);
			\draw [color=zzttqq] (2,8)-- (3,8);
			\draw [color=zzttqq] (3,8)-- (3,9);
			\draw [color=zzttqq] (3,9)-- (2,9);
			\draw [color=zzttqq] (2,7)-- (2,6);
			\draw [color=zzttqq] (2,6)-- (3,6);
			\draw [color=zzttqq] (3,6)-- (3,7);
			\draw [color=zzttqq] (3,7)-- (2,7);
			\draw [color=zzttqq] (6,9)-- (6,8);
			\draw [color=zzttqq] (6,8)-- (7,8);
			\draw [color=zzttqq] (7,8)-- (7,9);
			\draw [color=zzttqq] (7,9)-- (6,9);
			\draw [color=zzttqq] (8,9)-- (8,8);
			\draw [color=zzttqq] (8,8)-- (9,8);
			\draw [color=zzttqq] (9,8)-- (9,9);
			\draw [color=zzttqq] (9,9)-- (8,9);
			\draw [color=zzttqq] (6,7)-- (6,6);
			\draw [color=zzttqq] (6,6)-- (7,6);
			\draw [color=zzttqq] (7,6)-- (7,7);
			\draw [color=zzttqq] (7,7)-- (6,7);
			\draw [color=zzttqq] (8,7)-- (8,6);
			\draw [color=zzttqq] (8,6)-- (9,6);
			\draw [color=zzttqq] (9,6)-- (9,7);
			\draw [color=zzttqq] (9,7)-- (8,7);
			\draw [color=zzttqq] (1,3)-- (0,3);
			\draw [color=zzttqq] (0,3)-- (0,2);
			\draw [color=zzttqq] (0,2)-- (1,2);
			\draw [color=zzttqq] (1,2)-- (1,3);
			\draw [color=zzttqq] (2,3)-- (2,2);
			\draw [color=zzttqq] (2,2)-- (3,2);
			\draw [color=zzttqq] (3,2)-- (3,3);
			\draw [color=zzttqq] (3,3)-- (2,3);
			\draw [color=zzttqq] (1,1)-- (0,1);
			\draw [color=zzttqq] (0,1)-- (0,0);
			\draw [color=zzttqq] (0,0)-- (1,0);
			\draw [color=zzttqq] (1,0)-- (1,1);
			\draw [color=zzttqq] (2,1)-- (2,0);
			\draw [color=zzttqq] (2,0)-- (3,0);
			\draw [color=zzttqq] (3,0)-- (3,1);
			\draw [color=zzttqq] (3,1)-- (2,1);
			\draw [color=zzttqq] (6,3)-- (7,3);
			\draw [color=zzttqq] (7,3)-- (7,2);
			\draw [color=zzttqq] (7,2)-- (6,2);
			\draw [color=zzttqq] (6,2)-- (6,3);
			\draw [color=zzttqq] (8,3)-- (9,3);
			\draw [color=zzttqq] (9,3)-- (9,2);
			\draw [color=zzttqq] (9,2)-- (8,2);
			\draw [color=zzttqq] (8,2)-- (8,3);
			\draw [color=zzttqq] (6,1)-- (6,0);
			\draw [color=zzttqq] (6,0)-- (7,0);
			\draw [color=zzttqq] (7,0)-- (7,1);
			\draw [color=zzttqq] (7,1)-- (6,1);
			\draw [color=zzttqq] (8,1)-- (8,0);
			\draw [color=zzttqq] (8,0)-- (9,0);
			\draw [color=zzttqq] (9,0)-- (9,1);
			\draw [color=zzttqq] (9,1)-- (8,1);
			\draw [color=zzttqq] (18,3)-- (18,2);
			\draw [color=zzttqq] (18,2)-- (19,2);
			\draw [color=zzttqq] (19,2)-- (19,3);
			\draw [color=zzttqq] (19,3)-- (18,3);
			\draw [color=zzttqq] (20,3)-- (20,2);
			\draw [color=zzttqq] (20,2)-- (21,2);
			\draw [color=zzttqq] (21,2)-- (21,3);
			\draw [color=zzttqq] (21,3)-- (20,3);
			\draw [color=zzttqq] (18,1)-- (18,0);
			\draw [color=zzttqq] (18,0)-- (19,0);
			\draw [color=zzttqq] (19,0)-- (19,1);
			\draw [color=zzttqq] (19,1)-- (18,1);
			\draw [color=zzttqq] (20,1)-- (20,0);
			\draw [color=zzttqq] (20,0)-- (21,0);
			\draw [color=zzttqq] (21,0)-- (21,1);
			\draw [color=zzttqq] (21,1)-- (20,1);
			\draw [color=zzttqq] (24,1)-- (24,0);
			\draw [color=zzttqq] (24,0)-- (25,0);
			\draw [color=zzttqq] (25,0)-- (25,1);
			\draw [color=zzttqq] (25,1)-- (24,1);
			\draw [color=zzttqq] (26,1)-- (26,0);
			\draw [color=zzttqq] (26,0)-- (27,0);
			\draw [color=zzttqq] (27,0)-- (27,1);
			\draw [color=zzttqq] (27,1)-- (26,1);
			\draw [color=zzttqq] (27,2)-- (26,2);
			\draw [color=zzttqq] (26,2)-- (26,3);
			\draw [color=zzttqq] (26,3)-- (27,3);
			\draw [color=zzttqq] (27,3)-- (27,2);
			\draw [color=zzttqq] (25,3)-- (24,3);
			\draw [color=zzttqq] (24,3)-- (24,2);
			\draw [color=zzttqq] (24,2)-- (25,2);
			\draw [color=zzttqq] (25,2)-- (25,3);
			\draw [color=zzttqq] (27,6)-- (26,6);
			\draw [color=zzttqq] (26,6)-- (26,7);
			\draw [color=zzttqq] (26,7)-- (27,7);
			\draw [color=zzttqq] (27,7)-- (27,6);
			\draw [color=zzttqq] (27,8)-- (27,9);
			\draw [color=zzttqq] (27,9)-- (26,9);
			\draw [color=zzttqq] (26,9)-- (26,8);
			\draw [color=zzttqq] (26,8)-- (27,8);
			\draw [color=zzttqq] (25,9)-- (25,8);
			\draw [color=zzttqq] (25,8)-- (24,8);
			\draw [color=zzttqq] (24,8)-- (24,9);
			\draw [color=zzttqq] (24,9)-- (25,9);
			\draw [color=zzttqq] (25,7)-- (24,7);
			\draw [color=zzttqq] (24,7)-- (24,6);
			\draw [color=zzttqq] (24,6)-- (25,6);
			\draw [color=zzttqq] (25,6)-- (25,7);
			\draw [color=zzttqq] (20,9)-- (21,9);
			\draw [color=zzttqq] (21,9)-- (21,8);
			\draw [color=zzttqq] (21,8)-- (20,8);
			\draw [color=zzttqq] (20,8)-- (20,9);
			\draw [color=zzttqq] (19,9)-- (18,9);
			\draw [color=zzttqq] (18,9)-- (18,8);
			\draw [color=zzttqq] (18,8)-- (19,8);
			\draw [color=zzttqq] (19,8)-- (19,9);
			\draw [color=zzttqq] (18,7)-- (19,7);
			\draw [color=zzttqq] (19,7)-- (19,6);
			\draw [color=zzttqq] (19,6)-- (18,6);
			\draw [color=zzttqq] (18,6)-- (18,7);
			\draw [color=zzttqq] (20,7)-- (21,7);
			\draw [color=zzttqq] (21,7)-- (21,6);
			\draw [color=zzttqq] (21,6)-- (20,6);
			\draw [color=zzttqq] (20,6)-- (20,7);
			\draw [color=zzttqq] (0,25)-- (1,25);
			\draw [color=zzttqq] (1,25)-- (1,24);
			\draw [color=zzttqq] (1,24)-- (0,24);
			\draw [color=zzttqq] (0,24)-- (0,25);
			\draw [color=zzttqq] (2,25)-- (3,25);
			\draw [color=zzttqq] (3,25)-- (3,24);
			\draw [color=zzttqq] (3,24)-- (2,24);
			\draw [color=zzttqq] (2,24)-- (2,25);
			\draw [color=zzttqq] (1,27)-- (1,26);
			\draw [color=zzttqq] (1,26)-- (0,26);
			\draw [color=zzttqq] (0,26)-- (0,27);
			\draw [color=zzttqq] (0,27)-- (1,27);
			\draw [color=zzttqq] (2,27)-- (2,26);
			\draw [color=zzttqq] (2,26)-- (3,26);
			\draw [color=zzttqq] (3,26)-- (3,27);
			\draw [color=zzttqq] (3,27)-- (2,27);
			\draw [color=zzttqq] (6,27)-- (6,26);
			\draw [color=zzttqq] (6,26)-- (7,26);
			\draw [color=zzttqq] (7,26)-- (7,27);
			\draw [color=zzttqq] (7,27)-- (6,27);
			\draw [color=zzttqq] (8,27)-- (8,26);
			\draw [color=zzttqq] (8,26)-- (9,26);
			\draw [color=zzttqq] (9,26)-- (9,27);
			\draw [color=zzttqq] (9,27)-- (8,27);
			\draw [color=zzttqq] (6,25)-- (6,24);
			\draw [color=zzttqq] (6,24)-- (7,24);
			\draw [color=zzttqq] (7,24)-- (7,25);
			\draw [color=zzttqq] (7,25)-- (6,25);
			\draw [color=zzttqq] (8,25)-- (8,24);
			\draw [color=zzttqq] (8,24)-- (9,24);
			\draw [color=zzttqq] (9,24)-- (9,25);
			\draw [color=zzttqq] (9,25)-- (8,25);
			\draw [color=zzttqq] (1,21)-- (0,21);
			\draw [color=zzttqq] (0,21)-- (0,20);
			\draw [color=zzttqq] (0,20)-- (1,20);
			\draw [color=zzttqq] (1,20)-- (1,21);
			\draw [color=zzttqq] (2,21)-- (2,20);
			\draw [color=zzttqq] (2,20)-- (3,20);
			\draw [color=zzttqq] (3,20)-- (3,21);
			\draw [color=zzttqq] (3,21)-- (2,21);
			\draw [color=zzttqq] (1,19)-- (0,19);
			\draw [color=zzttqq] (0,19)-- (0,18);
			\draw [color=zzttqq] (0,18)-- (1,18);
			\draw [color=zzttqq] (1,18)-- (1,19);
			\draw [color=zzttqq] (2,19)-- (2,18);
			\draw [color=zzttqq] (2,18)-- (3,18);
			\draw [color=zzttqq] (3,18)-- (3,19);
			\draw [color=zzttqq] (3,19)-- (2,19);
			\draw (7.7,16.38)-- (7,18);
			\draw(7.7,16.38) circle (0.62cm);
			\draw (3.95,13.08) node[anchor=north west] {{\tiny $B(x,ad(x,\Omega^c))$}};
			\draw (8.16,16.89) node[anchor=north west] {{\tiny x}};
			\draw (3.72,18.43) node[anchor=north west] {{\tiny $Q'$}};
			\begin{scriptsize}
				\fill [color=black] (0,27) circle (1.0pt);
				\fill [color=black] (0,9) circle (1.0pt);
				\fill [color=black] (0,18) circle (1.0pt);
				\fill [color=black] (9,0) circle (1.0pt);
				\fill [color=black] (18,0) circle (1.0pt);
				\fill [color=black] (27,0) circle (1.0pt);
				\fill [color=black] (21,0) circle (1.0pt);
				\fill [color=black] (24,0) circle (1.0pt);
				\fill [color=black] (25,0) circle (1.0pt);
				\fill [color=black] (26,0) circle (1.0pt);
				\fill [color=black] (19,0) circle (1.0pt);
				\fill [color=black] (20,0) circle (1.0pt);
				\fill [color=black] (3,0) circle (1.0pt);
				\fill [color=black] (6,0) circle (1.0pt);
				\fill [color=black] (1,0) circle (1.0pt);
				\fill [color=black] (2,0) circle (1.0pt);
				\fill [color=black] (7,0) circle (1.0pt);
				\fill [color=black] (8,0) circle (1.0pt);
				\fill [color=black] (0,1) circle (1.0pt);
				\fill [color=uququq] (0,0) circle (1.0pt);
				\fill [color=black] (0,2) circle (1.0pt);
				\fill [color=black] (0,3) circle (1.0pt);
				\fill [color=black] (0,6) circle (1.0pt);
				\fill [color=black] (0,7) circle (1.0pt);
				\fill [color=black] (0,8) circle (1.0pt);
				\fill [color=black] (0,21) circle (1.0pt);
				\fill [color=black] (0,24) circle (1.0pt);
				\fill [color=black] (0,19) circle (1.0pt);
				\fill [color=black] (0,20) circle (1.0pt);
				\fill [color=black] (0,25) circle (1.0pt);
				\fill [color=black] (0,26) circle (1.0pt);
				\fill [color=uququq] (27,27) circle (1.0pt);
				\fill [color=uququq] (9,27) circle (1.0pt);
				\fill [color=uququq] (18,27) circle (1.0pt);
				\fill [color=uququq] (18,18) circle (1.0pt);
				\fill [color=uququq] (9,18) circle (1.0pt);
				\fill [color=uququq] (9,9) circle (1.0pt);
				\fill [color=uququq] (18,9) circle (1.0pt);
				\fill [color=uququq] (27,18) circle (1.0pt);
				\fill [color=uququq] (27,9) circle (1.0pt);
				\fill [color=uququq] (3,27) circle (1.0pt);
				\fill [color=uququq] (3,24) circle (1.0pt);
				\fill [color=uququq] (6,24) circle (1.0pt);
				\fill [color=uququq] (6,27) circle (1.0pt);
				\fill [color=uququq] (6,21) circle (1.0pt);
				\fill [color=uququq] (3,21) circle (1.0pt);
				\fill [color=uququq] (3,18) circle (1.0pt);
				\fill [color=uququq] (6,18) circle (1.0pt);
				\fill [color=uququq] (9,21) circle (1.0pt);
				\fill [color=uququq] (9,24) circle (1.0pt);
				\fill [color=uququq] (18,24) circle (1.0pt);
				\fill [color=uququq] (18,21) circle (1.0pt);
				\fill [color=uququq] (21,24) circle (1.0pt);
				\fill [color=uququq] (21,27) circle (1.0pt);
				\fill [color=uququq] (24,27) circle (1.0pt);
				\fill [color=uququq] (24,24) circle (1.0pt);
				\fill [color=uququq] (27,24) circle (1.0pt);
				\fill [color=uququq] (27,21) circle (1.0pt);
				\fill [color=uququq] (24,21) circle (1.0pt);
				\fill [color=uququq] (21,21) circle (1.0pt);
				\fill [color=uququq] (21,18) circle (1.0pt);
				\fill [color=uququq] (24,18) circle (1.0pt);
				\fill [color=uququq] (21,9) circle (1.0pt);
				\fill [color=uququq] (24,9) circle (1.0pt);
				\fill [color=uququq] (24,6) circle (1.0pt);
				\fill [color=uququq] (21,6) circle (1.0pt);
				\fill [color=uququq] (21,3) circle (1.0pt);
				\fill [color=uququq] (24,3) circle (1.0pt);
				\fill [color=uququq] (27,3) circle (1.0pt);
				\fill [color=uququq] (27,6) circle (1.0pt);
				\fill [color=uququq] (18,3) circle (1.0pt);
				\fill [color=uququq] (9,3) circle (1.0pt);
				\fill [color=uququq] (9,6) circle (1.0pt);
				\fill [color=uququq] (6,6) circle (1.0pt);
				\fill [color=uququq] (6,9) circle (1.0pt);
				\fill [color=uququq] (3,9) circle (1.0pt);
				\fill [color=uququq] (3,6) circle (1.0pt);
				\fill [color=uququq] (3,3) circle (1.0pt);
				\fill [color=uququq] (6,3) circle (1.0pt);
				\fill [color=uququq] (18,6) circle (1.0pt);
				\fill [color=uququq] (3,26) circle (1.0pt);
				\fill [color=uququq] (3,25) circle (1.0pt);
				\fill [color=uququq] (6,26) circle (1.0pt);
				\fill [color=uququq] (6,25) circle (1.0pt);
				\fill [color=uququq] (9,26) circle (1.0pt);
				\fill [color=uququq] (21,25) circle (1.0pt);
				\fill [color=uququq] (21,26) circle (1.0pt);
				\fill [color=uququq] (24,26) circle (1.0pt);
				\fill [color=uququq] (24,25) circle (1.0pt);
				\fill [color=uququq] (27,26) circle (1.0pt);
				\fill [color=uququq] (27,25) circle (1.0pt);
				\fill [color=uququq] (18,25) circle (1.0pt);
				\fill [color=uququq] (9,25) circle (1.0pt);
				\fill [color=uququq] (3,20) circle (1.0pt);
				\fill [color=uququq] (3,19) circle (1.0pt);
				\fill [color=uququq] (6,20) circle (1.0pt);
				\fill [color=uququq] (6,19) circle (1.0pt);
				\fill [color=uququq] (9,19) circle (1.0pt);
				\fill [color=uququq] (9,20) circle (1.0pt);
				\fill [color=uququq] (18,20) circle (1.0pt);
				\fill [color=uququq] (18,19) circle (1.0pt);
				\fill [color=uququq] (21,19) circle (1.0pt);
				\fill [color=uququq] (21,20) circle (1.0pt);
				\fill [color=uququq] (24,20) circle (1.0pt);
				\fill [color=uququq] (24,19) circle (1.0pt);
				\fill [color=uququq] (27,19) circle (1.0pt);
				\fill [color=uququq] (27,20) circle (1.0pt);
				\fill [color=uququq] (3,8) circle (1.0pt);
				\fill [color=uququq] (3,7) circle (1.0pt);
				\fill [color=uququq] (3,2) circle (1.0pt);
				\fill [color=uququq] (3,1) circle (1.0pt);
				\fill [color=black] (6,2) circle (1.0pt);
				\fill [color=uququq] (6,1) circle (1.0pt);
				\fill [color=uququq] (9,2) circle (1.0pt);
				\fill [color=uququq] (9,1) circle (1.0pt);
				\fill [color=uququq] (9,8) circle (1.0pt);
				\fill [color=uququq] (18,8) circle (1.0pt);
				\fill [color=uququq] (24,8) circle (1.0pt);
				\fill [color=uququq] (24,7) circle (1.0pt);
				\fill [color=uququq] (21,7) circle (1.0pt);
				\fill [color=uququq] (9,7) circle (1.0pt);
				\fill [color=uququq] (18,7) circle (1.0pt);
				\fill [color=uququq] (18,2) circle (1.0pt);
				\fill [color=uququq] (18,1) circle (1.0pt);
				\fill [color=uququq] (24,2) circle (1.0pt);
				\fill [color=uququq] (24,1) circle (1.0pt);
				\fill [color=uququq] (21,1) circle (1.0pt);
				\fill [color=uququq] (21,2) circle (1.0pt);
				\fill [color=uququq] (27,2) circle (1.0pt);
				\fill [color=uququq] (27,1) circle (1.0pt);
				\fill [color=uququq] (1,8) circle (1.0pt);
				\fill [color=uququq] (1,9) circle (1.0pt);
				\fill [color=uququq] (2,9) circle (1.0pt);
				\fill [color=uququq] (2,8) circle (1.0pt);
				\fill [color=uququq] (2,7) circle (1.0pt);
				\fill [color=uququq] (1,7) circle (1.0pt);
				\fill [color=uququq] (1,6) circle (1.0pt);
				\fill [color=uququq] (2,6) circle (1.0pt);
				\fill [color=uququq] (7,6) circle (1.0pt);
				\fill [color=uququq] (8,6) circle (1.0pt);
				\fill [color=uququq] (8,7) circle (1.0pt);
				\fill [color=uququq] (8,8) circle (1.0pt);
				\fill [color=uququq] (7,8) circle (1.0pt);
				\fill [color=black] (6,8) circle (1.0pt);
				\fill [color=uququq] (6,7) circle (1.0pt);
				\fill [color=uququq] (7,7) circle (1.0pt);
				\fill [color=uququq] (19,9) circle (1.0pt);
				\fill [color=uququq] (20,9) circle (1.0pt);
				\fill [color=uququq] (20,8) circle (1.0pt);
				\fill [color=uququq] (19,8) circle (1.0pt);
				\fill [color=uququq] (19,7) circle (1.0pt);
				\fill [color=uququq] (20,7) circle (1.0pt);
				\fill [color=uququq] (19,6) circle (1.0pt);
				\fill [color=uququq] (20,6) circle (1.0pt);
				\fill [color=uququq] (25,8) circle (1.0pt);
				\fill [color=uququq] (26,8) circle (1.0pt);
				\fill [color=uququq] (26,9) circle (1.0pt);
				\fill [color=uququq] (25,9) circle (1.0pt);
				\fill [color=uququq] (26,7) circle (1.0pt);
				\fill [color=uququq] (25,7) circle (1.0pt);
				\fill [color=uququq] (27,8) circle (1.0pt);
				\fill [color=uququq] (27,7) circle (1.0pt);
				\fill [color=uququq] (26,6) circle (1.0pt);
				\fill [color=uququq] (25,6) circle (1.0pt);
				\fill [color=uququq] (20,3) circle (1.0pt);
				\fill [color=uququq] (19,3) circle (1.0pt);
				\fill [color=uququq] (19,2) circle (1.0pt);
				\fill [color=uququq] (20,2) circle (1.0pt);
				\fill [color=uququq] (19,1) circle (1.0pt);
				\fill [color=uququq] (20,1) circle (1.0pt);
				\fill [color=uququq] (25,3) circle (1.0pt);
				\fill [color=uququq] (26,3) circle (1.0pt);
				\fill [color=uququq] (26,2) circle (1.0pt);
				\fill [color=uququq] (25,2) circle (1.0pt);
				\fill [color=uququq] (25,1) circle (1.0pt);
				\fill [color=uququq] (26,1) circle (1.0pt);
				\fill [color=uququq] (8,1) circle (1.0pt);
				\fill [color=uququq] (7,1) circle (1.0pt);
				\fill [color=uququq] (7,2) circle (1.0pt);
				\fill [color=uququq] (8,2) circle (1.0pt);
				\fill [color=uququq] (8,3) circle (1.0pt);
				\fill [color=uququq] (7,3) circle (1.0pt);
				\fill [color=uququq] (2,3) circle (1.0pt);
				\fill [color=uququq] (2,2) circle (1.0pt);
				\fill [color=uququq] (1,2) circle (1.0pt);
				\fill [color=uququq] (1,3) circle (1.0pt);
				\fill [color=uququq] (1,1) circle (1.0pt);
				\fill [color=uququq] (2,1) circle (1.0pt);
				\fill [color=uququq] (8,9) circle (1.0pt);
				\fill [color=uququq] (7,9) circle (1.0pt);
				\fill [color=uququq] (21,8) circle (1.0pt);
				\fill [color=uququq] (1,27) circle (1.0pt);
				\fill [color=uququq] (2,27) circle (1.0pt);
				\fill [color=uququq] (2,26) circle (1.0pt);
				\fill [color=uququq] (1,26) circle (1.0pt);
				\fill [color=uququq] (1,25) circle (1.0pt);
				\fill [color=uququq] (2,25) circle (1.0pt);
				\fill [color=uququq] (1,24) circle (1.0pt);
				\fill [color=uququq] (2,24) circle (1.0pt);
				\fill [color=uququq] (7,24) circle (1.0pt);
				\fill [color=uququq] (7,25) circle (1.0pt);
				\fill [color=uququq] (7,26) circle (1.0pt);
				\fill [color=uququq] (7,27) circle (1.0pt);
				\fill [color=uququq] (8,27) circle (1.0pt);
				\fill [color=uququq] (8,26) circle (1.0pt);
				\fill [color=uququq] (8,25) circle (1.0pt);
				\fill [color=uququq] (8,24) circle (1.0pt);
				\fill [color=uququq] (2,21) circle (1.0pt);
				\fill [color=uququq] (1,21) circle (1.0pt);
				\fill [color=uququq] (1,20) circle (1.0pt);
				\fill [color=uququq] (1,19) circle (1.0pt);
				\fill [color=uququq] (1,18) circle (1.0pt);
				\fill [color=uququq] (2,18) circle (1.0pt);
				\fill [color=uququq] (2,19) circle (1.0pt);
				\fill [color=uququq] (2,20) circle (1.0pt);
				\fill [color=uququq] (7,21) circle (1.0pt);
				\fill [color=uququq] (8,21) circle (1.0pt);
				\fill [color=uququq] (7,20) circle (1.0pt);
				\fill [color=uququq] (8,20) circle (1.0pt);
				\fill [color=uququq] (8,19) circle (1.0pt);
				\fill [color=uququq] (7,19) circle (1.0pt);
				\fill [color=uququq] (7,18) circle (1.0pt);
				\fill [color=uququq] (8,18) circle (1.0pt);
				\fill [color=uququq] (19,21) circle (1.0pt);
				\fill [color=uququq] (20,20) circle (1.0pt);
				\fill [color=uququq] (19,19) circle (1.0pt);
				\fill [color=uququq] (19,18) circle (1.0pt);
				\fill [color=uququq] (20,18) circle (1.0pt);
				\fill [color=uququq] (20,21) circle (1.0pt);
				\fill [color=uququq] (25,21) circle (1.0pt);
				\fill [color=uququq] (26,20) circle (1.0pt);
				\fill [color=uququq] (25,20) circle (1.0pt);
				\fill [color=uququq] (25,19) circle (1.0pt);
				\fill [color=uququq] (26,19) circle (1.0pt);
				\fill [color=uququq] (25,18) circle (1.0pt);
				\fill [color=uququq] (26,18) circle (1.0pt);
				\fill [color=uququq] (26,24) circle (1.0pt);
				\fill [color=uququq] (25,24) circle (1.0pt);
				\fill [color=uququq] (25,25) circle (1.0pt);
				\fill [color=uququq] (26,25) circle (1.0pt);
				\fill [color=uququq] (26,26) circle (1.0pt);
				\fill [color=uququq] (25,26) circle (1.0pt);
				\fill [color=uququq] (25,27) circle (1.0pt);
				\fill [color=uququq] (26,27) circle (1.0pt);
				\fill [color=uququq] (20,27) circle (1.0pt);
				\fill [color=uququq] (19,27) circle (1.0pt);
				\fill [color=uququq] (19,26) circle (1.0pt);
				\fill [color=uququq] (20,26) circle (1.0pt);
				\fill [color=uququq] (20,25) circle (1.0pt);
				\fill [color=uququq] (19,25) circle (1.0pt);
				\fill [color=uququq] (19,24) circle (1.0pt);
				\fill [color=uququq] (20,24) circle (1.0pt);
				\fill [color=uququq] (26,21) circle (1.0pt);
				\fill [color=uququq] (18,26) circle (1.0pt);
				\fill [color=uququq] (19,20) circle (1.0pt);
				\fill [color=uququq] (20,19) circle (1.0pt);
				\fill [color=black] (7.7,16.38) circle (1.5pt);
			\end{scriptsize}
		\end{tikzpicture}
		\vspace{-.7cm}
		\caption{The white area is in $\Omega\cap Q$ and black dots belong to $\Omega^c$}
	\end{figure}
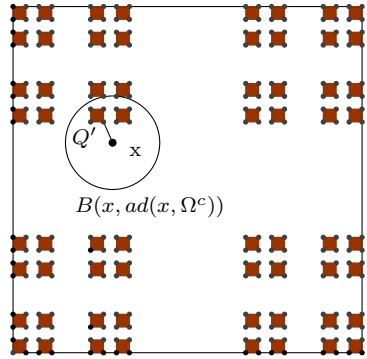

	The condition \eqref{key} in Theorem \ref{theorem1} is equivalent to the geometric condition that characterizes ``sets preserving the global Markov inequality''. A closed set $F\subset\rn$ is said to preserve the global Markov inequality if for every positive integer $k$ and every ball $B_r(y)$ with $y\in F, r>0$, we have
	\begin{equation}\label{setspreservingmarkovmarkov}
		\linfno{\nabla P}{F\cap B_r(y)}\le c(F,k)r^{-1}\linfno{P}{F\cap B_r(y)}, \quad P\in \pk,
	\end{equation}
	where $c(F,k)$ is a constant depending only on $F$ and $k$. It turns out that such sets can be characterized in terms of the width. More specifically, $F$ preserves the global Markov inequality if and only if there exists a constant $\varepsilon>0$ such that for all $y\in F, r>0$ we have
	\begin{equation}\label{globalmarkov}
		\frac{w(F\cap B_r(y))}{r}>\varepsilon
	\end{equation}
	(see \cite{MR0820626} Theorem 2 p. 38, and \cite{MR2868143} Chapter 9 Section 9.2). In the next proposition we will show that $\Omega$ satisfies \eqref{key} exactly when $\Omega^c$ preserves the global Markov inequality.
	
	\begin{proposition}\label{proposition}
		A proper open set $\Omega\subset\rn$ satisfies condition \eqref{key} if and only if $\Omega^c$ preserves the global Markov inequality.
	\end{proposition}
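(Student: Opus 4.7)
The plan is to invoke the cited characterization of sets preserving the global Markov inequality (\cite{MR0820626} Theorem 2 p.~38), by which the proposition reduces to the equivalence of condition \eqref{key} with condition \eqref{globalmarkov} applied to the closed set $F=\Omega^c$. Both directions will rely on the monotonicity of the width functional under inclusion: $E_1\subseteq E_2$ implies $w(E_1)\le w(E_2)$.

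The direction \eqref{globalmarkov}$\Rightarrow$\eqref{key} should be immediate. I would fix $x\in\Omega$ and pick a nearest point $y\in\Omega^c$ with $|x-y|=d(x)$; then $B(y,2d(x))\subseteq B(x,3d(x))$, so applying \eqref{globalmarkov} at the pair $(y,2d(x))$ and using monotonicity would give $w(\Omega^c\cap B(x,3d(x)))\ge w(\Omega^c\cap B(y,2d(x)))>2\varepsilon d(x)$, yielding \eqref{key} with $a=3$ and $\delta=2\varepsilon$.

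For \eqref{key}$\Rightarrow$\eqref{globalmarkov} I would argue by contradiction. Given $y\in\Omega^c$ and $r>0$, suppose $w(\Omega^c\cap B_r(y))\le\eta r$ for a parameter $\eta$ to be pinned down in terms of $a,\delta$. Then $\Omega^c\cap B_r(y)$ lies in a strip $S$ of width $\eta r$ between two parallel hyperplanes; orient the unit normal $\nu$ so the nearer bounding plane sits at distance $\le\eta r/2$ from $y$. Set $s:=r/(a+1)$ and $x:=y+s\nu$, assuming $\eta<2/(a+1)$ so that $s$ exceeds the distance from $y$ to that near plane. Then $x\in B_r(y)\setminus S$, hence $x\in\Omega$, with $d(x,S)\ge s-\eta r/2$. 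Since $\Omega^c\cap B_r(y)\subseteq S$ and $\Omega^c\setminus B_r(y)$ lies at distance $\ge r-s=as\ge s$ from $x$, I obtain the lower bound $d(x)\ge s-\eta r/2=r/(a+1)-\eta r/2$. The trivial upper bound $d(x)\le|x-y|=s$ then gives $|x-y|+ad(x)\le(1+a)s=r$, so $B(x,ad(x))\subseteq B_r(y)$. Applying \eqref{key} at $x$ yields $w(\Omega^c\cap B(x,ad(x)))>\delta d(x)$, while $\Omega^c\cap B(x,ad(x))\subseteq S$ forces the same width to be $\le\eta r$. Comparing gives $\delta(\tfrac{1}{a+1}-\tfrac{\eta}{2})<\eta$, which fails once $\eta<\tfrac{2\delta}{(a+1)(2+\delta)}$; any smaller $\eta$ is the desired $\varepsilon$.

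The main obstacle I foresee is the geometric balancing in the second direction: the displacement $s$ must be simultaneously (i) small enough to keep $B(x,ad(x))$ inside $B_r(y)$, which essentially forces $s\le r/(a+1)$, and (ii) large enough that $x$ clears the thin strip $S$ by a definite fraction of $r$ so that $d(x)\gtrsim r$. The choice $s=r/(a+1)$ is dictated by (i), and the final quantitative inequality between $d(x)$, $\eta r$, and the extension constants $a,\delta$ is exactly the content of the contradiction.
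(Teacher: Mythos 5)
Your proof is correct. The easy direction \eqref{globalmarkov}$\Rightarrow$\eqref{key} is the same monotonicity argument as the paper's (which uses $B(y,d(x))\subset B(x,2d(x))$, $a=2$, where you use $B(y,2d(x))\subset B(x,3d(x))$, $a=3$). For the harder direction your route is genuinely different in structure, though it turns on the same pivot: producing a point $x\in\Omega$ at distance $\frac{r}{1+a}$ from $y$ with $d(x)\gtrsim r$, so that $B(x,ad(x))\subset B_r(y)$ and \eqref{key} can be invoked. The paper argues directly, examining the sphere $S(y,\frac{r}{1+a})$ and splitting into three cases (the sphere lies in $\Omega^c$; every point of the sphere is within $\frac{r}{2(1+a)}$ of $\Omega^c$; some point of the sphere has $d(x)\ge\frac{r}{2(1+a)}$), with the first two cases disposed of by explicit lower bounds on the width. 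Your contradiction hypothesis --- that $\Omega^c\cap B_r(y)$ sits in a strip $S$ of width $\eta r$ --- lets you manufacture the needed $x$ deterministically by stepping off the strip along its normal, and it automatically absorbs the paper's first two cases (a sphere of radius $\frac{r}{1+a}$ cannot fit inside a thin strip, nor can all of its points be close to a set trapped in one). The price is the bookkeeping you already flag (the requirement $\eta<\frac{2}{a+1}$ so that $x$ clears the strip, and the closing inequality $\delta(\frac{1}{a+1}-\frac{\eta}{2})<\eta$); the payoff is a single unified argument with an explicit constant $\varepsilon=\frac{2\delta}{(a+1)(2+\delta)}$, comparable to the paper's $\frac{\min(1,\delta)}{2(1+a)}$. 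One pedantic point for a final write-up: since $w$ is an infimum over directions, you should either observe that for a bounded set the infimum is attained (the directional width is continuous on the compact unit sphere), or run the argument with a strip of width $(1+\epsilon)\eta r$ and let $\epsilon\to0$; this does not affect the conclusion.
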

	\begin{proof}
		First suppose $\Omega^c$ preserves the global Markov inequality hence for all $y\in\Omega^c$ and $r>0$, \eqref{globalmarkov} holds with $F=\Omega^c$ and some $\varepsilon>0$. Now for an arbitrary
		$x\in\Omega$ we consider $B(x,2d(x))$ and choose $y\in\Omega^c\cap B(x,2d(x))$ such that $|y-x|=d(x)$. Then we note that $B(y,d(x))\subset B(x,2d(x))$ and this together with \eqref{globalmarkov} gives us
		\[
		w(B(x,2d(x))\cap \Omega^c)\ge w(B(y,d(x))\cap \Omega^c)>\varepsilon d(x),
		\] 
		which proves that \eqref{key} holds for $\Omega$ with $a=2$ and $\delta=\varepsilon$.\\
		
		To prove the other direction, suppose \eqref{key} holds for $\Omega$ with some $a>1$ and $\delta>0$. Now for an arbitrary $y\in \Omega^c$ and $r>0$ we consider the sphere 
		\[
		S(y,\frac{r}{1+a})=\left\{z\in\rn: |z-y|=\frac{r}{1+a}\right\}.
		\]
		Then we note that if $S(y,\frac{r}{1+a})\subset \Omega^c$ we have
		\[
		w(B(y,r)\cap \Omega^c)\ge w(S(y,\frac{r}{1+a}))=\frac{2r}{1+a},
		\]
		which shows that \eqref{globalmarkov} holds with $\varepsilon=\frac{2}{1+a}$. So it remains to assume $S(y,\frac{r}{1+a})\cap \Omega\ne\emptyset$, in which case we set 
		\[
		\varepsilon_0=\sup\left\{d(z): z\in \Omega\cap S(y,\frac{r}{1+a})\right\},
		\]
		and consider two cases $\varepsilon_0<\frac{r}{2(1+a)}$ and $\varepsilon_0\ge\frac{r}{2(1+a)}$. In the first case, for each $z\in S(y,\frac{r}{1+a})$ there exists some $y_z\in B(y,r)\cap \Omega^c$ such that $|y_z-z|<\frac{r}{2(1+a)}$. Therefore, in the direction $\nu=\frac{z-y}{|z-y|}$ we have 
		\[
		|(y-y_z)\cdot\nu|\ge |y-z|-|y_z-z|>\frac{r}{2(1+a)}.
		\]
		This shows that $w(\Omega^c\cap B(y,r))\ge\frac{r}{2(1+a)}$ and hence \eqref{globalmarkov} holds with $\varepsilon=\frac{1}{2(1+a)}$. Finally, when $\varepsilon_0\ge\frac{r}{2(1+a)}$ there is $x\in S(y,\frac{r}{1+a})\cap \Omega$ such that $\frac{r}{2(1+a)}\le d(x)\le\frac{r}{1+a}$. Noting that $B(x,ad(x))\subset B(y,r)$ together with \eqref{key} gives us
		\[
		w(\Omega^c\cap B(y,r))\ge w(\Omega^c\cap B(x,ad(x)))>\delta d(x)\ge \delta\frac{r}{2(1+a)},
		\]
		which shows that \eqref{globalmarkov} holds with $\varepsilon=\frac{\delta}{2(1+a)}$. So in any case the condition \eqref{globalmarkov} holds with $\varepsilon=\frac{\min(1,\delta)}{2(1+a)}$, and this completes the proof.

	\end{proof}
	
	We break the proof of Theorem \ref{theorem1} into two parts, the sufficiency and the necessity. In the next section, we will see it is easy to show that \eqref{h1cc} is enough for extending each $(1,\Omega)$-atom to some 1-atom in $\rn$. To prove that \eqref{key} is sufficient, we need to show that it allows us to choose an interpolation problem and solve it properly, which is the content of Lemma \ref{interpolationlemma}. This minor difficulty comes from the fact that when $n(\frac{1}{p}-1) \in \mathbb{N}$, the order of distributions in $\hp$ can not exceed than $n(\frac{1}{p}-1)-1$, while they must have vanishing moments up to order $n(\frac{1}{p}-1)$. This becomes more clear when it is compared to the case $n(\frac{1}{p}-1)\notin \mathbb{N}\cup \{0\}$.
	\section{Proof of sufficiency}
	We begin with the following lemma, which looks like a reverse Markov's inequality \eqref{setspreservingmarkovmarkov} and is already known. Nevertheless, for the convenience of the reader we give a proof of this here (see \cite{MR0820626} Remark 1 p. 35,  and \cite{MR2868143} Theorem 9.21).  
	\begin{lemma}\label{ckhyperlemma}
		Suppose $E$ is a subset of the unit ball $B$ of $\rn$. Then for any $k=1,2,..$, we have
		
		\begin{equation}\label{hpc}
			{\|P\|}_{C^{k-1}(E)} > c(n,k)w(E){\|P\|}_{L^{\infty}(B)}, \quad P \in \pk.
		\end{equation}
		
	\end{lemma}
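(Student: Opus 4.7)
The plan is to induct on $k$, at each step reducing $\pk$ to $\mathbb{P}_{k-1}$ by passing to the gradient.

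For the base case $k=1$, write $P(x)=a_0+a\cdot x$, so $\|P\|_{L^\infty(B)}=|a_0|+|a|$. If $a\neq 0$, set $\nu=a/|a|$; by the definition of width the linear functional $x\mapsto a\cdot x$ has oscillation at least $|a|\,w(E)$ on $E$, hence $\sup_E|P|\geq |a|\,w(E)/2$. Independently, for every $x_0\in E\subset B$ one has $|P(x_0)|\geq |a_0|-|a|$. A case split on whether $|a|\geq |a_0|/2$ or $|a|<|a_0|/2$ (so that $\|P\|_{L^\infty(B)}$ is comparable to $|a|$ or to $|a_0|$, respectively), combined with $w(E)\leq 2$, yields the base case.

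For the inductive step, assume the conclusion holds with some constant $c_{k-1}$ for polynomials in $\mathbb{P}_{k-1}$. Given $P\in\pk$, each component $\partial_i P$ lies in $\mathbb{P}_{k-1}$, and the trivial bound $\|\partial_i P\|_{C^{k-2}(E)}\leq \|P\|_{C^{k-1}(E)}$ together with the inductive hypothesis gives
\[
\|\nabla P\|_{L^\infty(B)}\leq \frac{1}{c_{k-1}\,w(E)}\,\|P\|_{C^{k-1}(E)}.
\]
To transfer this back to $P$ itself, fix any $x_0\in E$; since $B$ is convex, the mean value theorem gives, for every $x\in B$,
\[
|P(x)|\leq |P(x_0)|+|x-x_0|\cdot\|\nabla P\|_{L^\infty(B)}\leq \|P\|_{C^{k-1}(E)}+2\|\nabla P\|_{L^\infty(B)}.
\]
Substituting the previous display and absorbing the first summand using $w(E)\leq 2$ produces $\|P\|_{L^\infty(B)}\leq C(n,k)\,w(E)^{-1}\|P\|_{C^{k-1}(E)}$, which rearranges to the desired inequality with $c(n,k)\approx c_{k-1}$ up to a universal factor.

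The only delicate point is the base case, where $|a_0|$ and $|a|$ must be balanced by two separate arguments in order to recover a lower bound that is linear (rather than quadratic) in $w(E)$; once this is in hand, the induction itself is entirely elementary, relying only on the triangle inequality and the mean value theorem on the convex ball $B$.
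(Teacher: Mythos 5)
Your proof is correct, and it organizes the argument differently from the paper. The paper argues by contradiction in one shot: assuming $\|P\|_{L^\infty(B)}=1$ and $\|P\|_{C^{k-1}(E)}\le c\,w(E)$, it Taylor-expands $P$ at a point $x_0\in E$, concludes that some top-order coefficient $\partial^\alpha P(x_0)$ with $|\alpha|=k$ must be bounded below, and then observes that the affine polynomial $\partial^{\alpha'}P$ (for $\alpha'$ obtained from $\alpha$ by lowering one entry) is small on $E$ while having a large gradient, which forces $E$ to be thin in the direction of that gradient --- a contradiction with $w(E)$. You instead induct on the degree: the base case $k=1$ is exactly the same ``affine function small on $E$ with nonnegligible gradient forces $E$ thin'' mechanism (handled carefully via the case split on $|a|$ versus $|a_0|$, which is indeed the delicate point, since $\|P\|_{L^\infty(B)}=|a_0|+|a|$ and one needs a bound linear in $w(E)$), and the inductive step passes from $P$ to $\nabla P$ using the trivial bound $\|\partial_i P\|_{C^{k-2}(E)}\le\|P\|_{C^{k-1}(E)}$ and then recovers $\|P\|_{L^\infty(B)}$ from $\|\nabla P\|_{L^\infty(B)}$ and one value $P(x_0)$, $x_0\in E$, by the mean value theorem on the convex ball. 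Both arguments are elementary and rest on the same core observation about affine functions and width; the paper's version is shorter because it jumps directly to a $(k-1)$-fold derivative rather than peeling off one derivative at a time, while yours yields an explicit recursion $c_k\approx c_{k-1}/(1+c_{k-1})$ for the constants and avoids any appeal to comparability of the sup-norm with the coefficient norm on $\pk$. (Both proofs implicitly assume $E\ne\emptyset$ and $w(E)>0$, which is how the lemma is used later, so this is not a defect.)
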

	
	\begin{proof}
		Suppose $P\in \pk$ with ${\|P\|}_{L^{\infty}(B)}=1$ but ${\|P\|}_{C^{k-1}(E)}\le cw(E)$. We will show that if $c=c(n,k)$ is chosen sufficiently small, the set $E$ lies entirely between two parallel hyperplanes at distance less than $\frac{1}{2}w(E)$, which is a contradiction. Take $x_0\in E$ and write the Taylor expansion of $P$ around this point then we have
		\[
		P(x)=\sum\limits_{|\alpha|\le k-1} \frac{\partial^{\alpha}P(x_0)}{\alpha!}(x-x_0)^\alpha + \sum\limits_{|\alpha|= k} \frac{\partial^{\alpha}P(x_0)}{\alpha!}(x-x_0)^\alpha,
		\]
		which implies the following inequality:
		\[
		{\|P\|}_{L^{\infty}(B)}\le c'(n,k)cw(E) +c'(n,k)\max\limits_{|\alpha|= k}|\partial^{\alpha}P(x_0)|.
		\]
		Noting that $w(E)\le2$, and for the moment choosing $c\le\frac{1}{4c'}$,  we get $|\partial^{\alpha}P(x_0)|\ge\frac{1}{2c'}$ for some $\alpha$ with $|\alpha|=k$. Now suppose $\alpha=(\alpha_1,...,\alpha_n)$ with $\alpha_i=\max\limits_{1\le j\le n }\alpha_j$ and let $\alpha'=(\alpha_1,..,\alpha_i-1..,\alpha_n)$. Then we note that in the derivative $\partial^{\alpha'}P$,  $x_i$ appears in one and only one term with the coefficient $\partial^{\alpha}P(x_0)$. So the polynomial $\partial^{\alpha'}P(x)$ has the form
		\[
		\partial^{\alpha'}P(x)=a\cdot x+b, \quad |a|\ge \frac{1}{2c'},
		\]
		and if we normalize it by setting $\nu=\frac{a}{|a|}$ and $b'=\frac{b}{|a|}$, for all $x\in E$ we get	
		\[
		|\nu \cdot x+b'|\le2c'cw(E),
		\]
		which means that $w(E,\nu)\le 4c'cw(E)$. So if we take $c(n,k)= \frac{1}{8c'}$ we get the desired contradiction and this finishes the proof.
	\end{proof}
	
	In the next two lemmas we show that how \eqref{hpc} can be relaxed so that we can replace $\|P\|_{C^{k-1}(E)}$ by a smaller quantity $\max\limits_{(x,\beta)\in F}|\partial^{\beta} P(x)|$, where $F$ is a set of pairs of points and indices with $\#F=\text{dim}(\pk)$. The first one is a general geometric result bounding the width of a compact set in $\rd$ with the width of its finite subsets with $d+1$ points.
	\begin{lemma}\label{rectanglelemma}
		Let $E$ be a bounded subset of $\rd$ and $\varepsilon>0$ such that for each $d$ points $x_1,x_2,...,x_d$ in $E$, there exists a unit vector $\nu$ satisfying $|\nu \cdot x_i|\le \varepsilon$ for $1\le i \le d$. Then there is a unit vector $\nu_0$ such that $|\nu_0 \cdot x|\le2d \varepsilon$ for every $x\in E$.
		
	\end{lemma}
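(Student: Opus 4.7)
The plan is to find the required unit vector $\nu_0$ by applying the hypothesis to a single well-chosen $d$-tuple of points of $E$ and then using linear algebra (Cramer's rule) to transfer the bound to all of $E$. First I would dispose of the degenerate case: if $E$ fails to span $\rd$, I simply take any unit vector $\nu_0$ orthogonal to $\operatorname{span}(E)$, so that $\nu_0\cdot x=0$ for every $x\in E$, and the conclusion holds trivially. Hence I may assume that $E$ spans $\rd$.

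In the spanning case, I would set $D:=\sup|\det(x_1,\ldots,x_d)|$, the supremum ranging over all $d$-tuples of points of $E$. Boundedness of $E$ makes $D$ finite, and the spanning hypothesis forces $D>0$. I would then pick a near-optimal tuple $x_1,\ldots,x_d\in E$ with $|\det(x_1,\ldots,x_d)|\ge D/2$, and apply the hypothesis to this tuple to produce a unit vector $\nu_0$ satisfying $|\nu_0\cdot x_i|\le\varepsilon$ for $i=1,\ldots,d$.

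To finish, I want to show that this same $\nu_0$ works for every $x\in E$. Since $x_1,\ldots,x_d$ are linearly independent, any $x\in E$ has a unique expansion $x=\sum_{i=1}^d c_i x_i$, and by Cramer's rule
$$c_i=\frac{\det(x_1,\ldots,x_{i-1},x,x_{i+1},\ldots,x_d)}{\det(x_1,\ldots,x_d)}.$$
The numerator has absolute value at most $D$ while the denominator has absolute value at least $D/2$, so $|c_i|\le 2$. Consequently,
$$|\nu_0\cdot x|=\Bigl|\sum_{i=1}^d c_i(\nu_0\cdot x_i)\Bigr|\le\sum_{i=1}^d|c_i|\,\varepsilon\le 2d\,\varepsilon,$$
which is the desired estimate.

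No step looks genuinely difficult; the only mild subtlety is that $E$ is only assumed bounded rather than compact, so the supremum $D$ need not be attained. I handle this by selecting a tuple realizing at least $D/2$, which is precisely what produces the factor $2$ in the bound $2d\varepsilon$ (an actual maximizer would give the sharper $d\varepsilon$). Alternatively, one could replace $E$ by its closure at the outset, since a diagonal argument using compactness of the unit sphere shows the hypothesis passes to $\overline{E}$.
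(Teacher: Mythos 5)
Your proof is correct, but it takes a genuinely different route from the paper's. The paper fixes a $(d-1)$-tuple $x_1,\dots,x_{d-1}\in\bar{E}$ maximizing the $(d-1)$-dimensional area of the simplex $\Delta(0,x_1,\dots,x_{d-1})$, takes $\nu_0$ normal to $\operatorname{span}(x_1,\dots,x_{d-1})$, and then, for each individual $x\in E$, invokes the hypothesis on the $d$-tuple $(x,x_1,\dots,x_{d-1})$ and transfers the resulting bound to $\nu_0$ by computing the volume of $\Delta(0,x,x_1,\dots,x_{d-1})$ in two ways (altitude times face), using that any face has area at most $d$ times the maximal one. You instead invoke the hypothesis exactly once, on a single $d$-tuple whose determinant is within a factor $2$ of the supremum $D$, and let the hypothesis itself hand you $\nu_0$; the transfer to a general $x\in E$ is done by Cramer's rule, with near-maximality of the determinant giving $|c_i|\le 2$. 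The two arguments are cousins --- both exploit an extremal configuration, and a determinant is $d!$ times a simplex volume --- but yours is cleaner on two counts: it sidesteps the attainment issue (the paper must pass to $\bar{E}$ to get a maximizer, whereas your factor-$2$ near-maximizer is precisely what produces the constant $2d$), and it applies the hypothesis only once rather than once per point of $E$. Both yield the same bound $2d\varepsilon$, and, as you observe, an exact maximizer would sharpen yours to $d\varepsilon$.
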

	
	\begin{proof}
		First we note that the assumption on $E$ holds for $\bar{E}$ too, which allows us to assume $E$ is compact. This implies that there are $x_1,x_2,...,x_{d-1} \in E$ such that the $(d-1)$-dimensional area of the simplex $\Delta(0,x_1,..x_{d-1})$ is maximal. Let $A=|\Delta(0,x_1,..x_{d-1})|$ and note that if $A=0$, $E$ entirely lies in a $d-2$ dimensional subspace and $\nu_0$ can be taken as a unit vector perpendicular to that. Then suppose $A>0$ and let $\nu_0$ be the normal vector to $H=span(x_1,...,x_{d-1})$. We prove this vector has the desired property, which means that for any $x\in E$ we have to show that $|\nu_0 \cdot x|\le2d \varepsilon$ or equivalently $d(x,H)\le2d \varepsilon$.\\
		
		Now from the assumption there is a unit vector $\nu$ such that $|\nu \cdot x_i|\le \varepsilon$, $|\nu \cdot x|\le \varepsilon$, which implies that the $d$ dimensional simplex $\Delta(0,x,x_1,...,x_{d-1})$ lies between two parallel hyperplane of distance $2\varepsilon$ from each other. Consequently, the length of one of the altitudes of this simplex must be no more than $2\varepsilon$. Let us call the length of this altitude $h$ and the area of its corresponding face $A'$. There are two cases, either $0$ is one of the vertices of this face or not. In the former case $A'\le A$, by maximality of $A$, and in the latter, $A'\le dA$. The reason for this is the fact that the area of each face of a simplex is no more than the sum of areas of other faces, and all the other faces have $0$ as a vertex. Now that we know $A'\le dA$, we calculate the $d$ dimensional volume of the simplex $\Delta(0,x,x_1,...,x_{d-1})$ in two ways with different altitudes and faces, and we get $d(x,H)A=hA'\le2\varepsilon.dA$. Hence $d(x,H)\le2d\varepsilon$ and for each $x\in E$, $|\nu_0 \cdot x|\le 2d\varepsilon$.
	\end{proof}
	
	\begin{lemma}\label{findingF}
		Let $k$ be a positive integer, $B$ the unit ball of $\rn$ and $E\subset B$ satisfying
		\begin{equation}\label{polycond}
			{\|P\|}_{C^{k-1}(E)}> \delta {\|P\|}_{L^{\infty}(B)}, \quad P \in \mathbb{P}_k.
		\end{equation}
		Then there exists a set $F\subset\left\{(x,\beta)|x\in E, |\beta|\le k-1 \right\}$ with $\#F=\text{dim}(\pk)$ such that
		
		\begin{equation}\label{fcondition}
			\max\limits_{(x,\beta)\in F}|\partial^{\beta} P(x)|>c(n,k) \delta {\|P\|}_{L^{\infty}(B)}, \quad P \in \mathbb{P}_k.
		\end{equation}
		
	\end{lemma}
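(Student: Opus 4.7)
The plan is a Lagrange-interpolation / volume-maximization argument. Set $N = \dim(\mathbb{P}_k)$ and fix any basis $P_1, \dots, P_N$ of $\mathbb{P}_k$ (for instance the monomial basis). For each candidate $F = \{(x_j, \beta_j)\}_{j=1}^N \subset E \times \{|\beta| \le k-1\}$, I form the $N \times N$ matrix $M(F) = \bigl(\partial^{\beta_j} P_i(x_j)\bigr)_{ij}$ and set $\Delta(F) := |\det M(F)|$. Since $E \subset B$ the entries are uniformly bounded so $\sup_F \Delta(F) < \infty$, and I select $F^* = \{(x_j^*, \beta_j^*)\}_{j=1}^N$ with $\Delta(F^*) \ge \tfrac{1}{2}\sup_F \Delta(F)$.

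First I would argue $\sup_F \Delta(F) > 0$: if it vanished, the functionals $L_{(x,\beta)}(P) := \partial^\beta P(x)$, $(x,\beta) \in E \times \{|\beta|\le k-1\}$, would span a proper subspace of $\mathbb{P}_k^*$, yielding a nonzero $P \in \mathbb{P}_k$ with $\|P\|_{C^{k-1}(E)} = 0$, which contradicts \eqref{polycond}. The main step is then to show that for any $(x,\beta) \in E \times \{|\beta|\le k-1\}$ one has an expansion $L_{(x,\beta)} = \sum_{j=1}^N a_j\, L_{(x_j^*,\beta_j^*)}$ with $|a_j|\le 2$. Evaluating both sides on each basis element $P_i$ reduces this to the linear system $\partial^\beta P_i(x) = \sum_j a_j M(F^*)_{ij}$, and Cramer's rule gives $a_j = \det M(F^{(j)})/\det M(F^*)$, where $F^{(j)}$ is $F^*$ with $(x_j^*, \beta_j^*)$ replaced by $(x, \beta)$; the near-maximality of $\Delta(F^*)$ then forces $|a_j| \le \Delta(F^{(j)})/\Delta(F^*) \le 2$.

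Combining this expansion with $\sum_j |a_j| \le 2N$ and the hypothesis \eqref{polycond}, I obtain
\[
\delta\|P\|_{L^\infty(B)} < \|P\|_{C^{k-1}(E)} = \sup_{(x,\beta)\in E\times\{|\beta|\le k-1\}} |L_{(x,\beta)}(P)| \le 2N \max_{1\le j\le N} |\partial^{\beta_j^*} P(x_j^*)|,
\]
which is \eqref{fcondition} with $F = F^*$ and $c(n,k) = 1/(2N)$. The mildly delicate step is the Cramer's rule bookkeeping: one must verify that the $j$-th column of $M(F^*)$ records $L_{(x_j^*,\beta_j^*)}$ evaluated on the basis $(P_i)$, so that replacing that column by the vector $(\partial^\beta P_i(x))_i$ is precisely the matrix $M(F^{(j)})$ for the swapped set of pairs. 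This identification is what turns the bound $|a_j|\le 2$ into a genuine consequence of the near-maximality of $\Delta(F^*)$, and it is also the reason a near-maximizer suffices, so there is no need to assume $E$ is closed to attain a supremum.
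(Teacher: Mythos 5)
Your proof is correct, and it takes a genuinely different route from the paper's. The paper transfers the problem to the coefficient space $\mathbb{R}^d$, $d=\dim(\mathbb{P}_k)$, representing each functional $P\mapsto\partial^{\beta}P(x)$ by a vector, and then applies a separate geometric fact (Lemma \ref{rectanglelemma}): if every $d$-point subset of a bounded set in $\mathbb{R}^d$ is nearly annihilated by some unit vector, then so is the whole set; running this in the contrapositive against \eqref{polycond} produces the $d$ pairs forming $F$. You instead choose $F$ directly as a near-maximizer of the generalized Vandermonde determinant $|\det(\partial^{\beta_j}P_i(x_j))|$ and bound the interpolation coefficients by Cramer's rule. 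The two arguments are close relatives --- the paper's width lemma is itself proved by maximizing a simplex volume, i.e.\ a determinant --- but yours is more self-contained (no intermediate geometric lemma) and, as you observe, using a $\tfrac12$-near-maximizer removes the need to pass to $\bar{E}$ for compactness, a step the paper does take inside Lemma \ref{rectanglelemma}. Two details worth a sentence in a final write-up: when the swapped-in pair $(x,\beta)$ duplicates a retained pair $(x_i^*,\beta_i^*)$ with $i\ne j$, the matrix $M(F^{(j)})$ has a repeated column and $a_j=0$, so $|a_j|\le 2$ still holds; and the identification of $\|P\|_{C^{k-1}(E)}$ with $\sup_{(x,\beta)}|\partial^{\beta}P(x)|$ is only up to a constant depending on the convention for the $C^{k-1}$-norm, harmlessly absorbed into $c(n,k)$. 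Both routes yield $c(n,k)$ of order $1/\dim(\mathbb{P}_k)$ times the constant comparing $\|\hat{P}\|$ with $\|P\|_{L^{\infty}(B)}$.
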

	\begin{proof}
		For $P \in \mathbb{P}_k$ with $P(y)=\sum\limits_{|\alpha|\le k}c_{\alpha}y^{\alpha}$ let $\hat{P}=(c_{\alpha})_{|\alpha|\le k}$. Here,  $\hat{P}$ is a $d$-dimensional vector with $d=\text{dim}\pk$, and we note that this defines an isomorphism between $\pk$ and $\rd$. Then from the finiteness of the dimension we have $\|\hat{P}\|\approx {\|P\|}_{L^{\infty}(B)}$, where $\|\hat{P}\|$ is the Euclidean norm in $\rd$. So in \eqref{polycond} we may replace ${\|P\|}_{L^{\infty}(B)}$ by $\|\hat{P}\|$, and $\delta$ by $c'(n,k) \delta$. Then for each $x\in E$ and $|\beta|\le k-1$ we set $\hat{(x,\beta)}=(b_{\alpha})_{|\alpha|\le k}$, where this vector is defined by
		
		\[
		b_\alpha =\begin{cases}
			\frac{\alpha!}{(\alpha-\beta)!}x^{\alpha-\beta}& \beta \le \alpha\\
			0& \text{otherwise}
		\end{cases},
		\]
		and it satisfies
		
		\[
		\partial^{\beta}P(x)=\left\langle\hat{P},\hat{(x,\beta)} \right\rangle, \quad P \in \mathbb{P}_k.
		\]
		Now consider the set 
		$$\hat{E}=\left\{\hat{(x,\beta)}|x\in E, |\beta|\le k-1 \right\},$$
		which is bounded in $\rd$, and note that if for every $d$ points like $Q_1,..,Q_d$ in $\hat{E}$ there exists a unit vector $\hat{P}$ with $|\left\langle \hat{P}, Q_i \right\rangle|\le \frac{c'(k,n)}{2d}\delta$, then according to Lemma \ref{rectanglelemma} there is a unit vector $\hat{P}$ such that $|\left\langle \hat{P}, Q \right\rangle|\le c'(k,n)\delta$ for each $Q\in \hat{E}$. This means that $\|P\|_{C^{k-1}(E)}\le c'(k,n)\delta$, which is a contradiction to \eqref{polycond}. So there exists $d$ points $Q_1,..,Q_d$ in $\hat{E}$ such that
		
		\[
		\inf_{\|\hat{P}\|=1}\max\limits_{1\le i \le d} |\left\langle \hat{P}, Q_i \right\rangle|> \frac{c'(k,n)}{2d}\delta,
		\]
		and this means that there is a set $F\subset\left\{(x,\beta)|x\in E, |\beta|\le k-1 \right\}$ with $\#F=\text{dim}(\pk)$ such that
		
		\[\max\limits_{(x,\beta)\in F}|\partial^{\beta} P(x)|>\frac{c'(k,n)}{2d}\delta {\|\hat{P}\|}, \quad P \in \mathbb{P}_k.
		\]
		Now it's enough to replace $\|\hat{P}\|$ by $\|P\|_{L^{\infty}(B)}$ and get \eqref{fcondition}.
	\end{proof}
	
	\begin{lemma}\label{interpolationlemma}
		Let $E$ be a subset of $B$, the unit ball of $\rn$, with $w(E)>0$. Then there exists a set $F\subset\left\{(x,\beta)|x\in E, |\beta|\le k-1 \right\}$ with $\#F=\text{dim}(\pk)$, and a basis of polynomials $\left\{P_{(x,\beta)}\right\}$ for $\pk$ such that for all $(x',\beta')\in F$ we have
		\begin{equation}\label{interpolationeq}
			\partial^{\beta'}(P_{(x,\beta)})(x')=\begin{cases}
				1 & (x',\beta')=(x,\beta)\\
				0 & (x',\beta')\ne(x,\beta)
			\end{cases} 
		\end{equation}
		Moreover, 
		\begin{equation}\label{estimate for p}
			\|P_{(x,\beta)}\|_{L^{\infty}(B)}\lesssim w(E)^{-1} \quad (x,\beta)\in F.
		\end{equation}
	\end{lemma}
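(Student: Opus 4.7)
The plan is to combine Lemmas \ref{ckhyperlemma} and \ref{findingF} to obtain a finite set of evaluation functionals that quantitatively separates polynomials, and then to extract the interpolating basis by a straightforward linear algebra argument. First I would apply Lemma \ref{ckhyperlemma} to the set $E$ to produce
\[
\|P\|_{C^{k-1}(E)} > c(n,k)\, w(E)\, \|P\|_{L^\infty(B)}, \quad P \in \pk,
\]
which is precisely hypothesis \eqref{polycond} with $\delta = c(n,k)\,w(E)$. Feeding this into Lemma \ref{findingF} then yields a set $F \subset \{(x,\beta) : x \in E,\ |\beta| \le k-1\}$ with $\#F = \dim\pk$ for which
\begin{equation}\label{keyineqplan}
\max_{(x,\beta)\in F} |\partial^{\beta} P(x)| > c'(n,k)\, w(E)\, \|P\|_{L^\infty(B)}, \quad P \in \pk.
\end{equation}

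The remainder of the argument is pure linear algebra. I would consider the evaluation map
\[
T \colon \pk \longrightarrow \mathbb{R}^{F}, \qquad T(P) := \bigl(\partial^{\beta} P(x)\bigr)_{(x,\beta)\in F}.
\]
Inequality \eqref{keyineqplan} shows that $\ker T = \{0\}$, and since $\#F = \dim\pk$, the map $T$ is a linear isomorphism. For each $(x,\beta)\in F$ I then define $P_{(x,\beta)}$ to be the unique element of $\pk$ satisfying $T(P_{(x,\beta)}) = e_{(x,\beta)}$, the corresponding standard basis vector of $\mathbb{R}^F$. This simultaneously furnishes a basis of $\pk$ and the Lagrange-type identity \eqref{interpolationeq}.

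It only remains to check the size bound \eqref{estimate for p}. Applying \eqref{keyineqplan} with $P = P_{(x,\beta)}$, the left-hand side equals $1$ by construction, so
\[
1 > c'(n,k)\, w(E)\, \|P_{(x,\beta)}\|_{L^\infty(B)},
\]
which rearranges to $\|P_{(x,\beta)}\|_{L^\infty(B)} \lesssim w(E)^{-1}$, as required.

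I do not foresee a serious obstacle. The substantive geometric content has already been absorbed by Lemmas \ref{ckhyperlemma}, \ref{rectanglelemma}, and \ref{findingF}; the only care needed here is to notice that the indices in $F$ are genuinely drawn from $\{(x,\beta): x\in E,\ |\beta|\le k-1\}$, so the $L^\infty$ norm of $P_{(x,\beta)}$ on the full ball $B$ is controlled by the $C^{k-1}(E)$-type data used to build $T$, which is exactly what Lemma \ref{findingF} guarantees.
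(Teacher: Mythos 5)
Your proposal is correct and follows essentially the same route as the paper: combine Lemma \ref{ckhyperlemma} with Lemma \ref{findingF} to get $\|P\|_{L^\infty(B)}\lesssim w(E)^{-1}\max_{(x,\beta)\in F}|\partial^\beta P(x)|$, deduce that the evaluation map is an isomorphism since $\#F=\dim\pk$, and read off both the interpolating basis and the bound \eqref{estimate for p} by applying the inequality to $P_{(x,\beta)}$ itself. No gaps.
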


	\begin{proof}
		From Lemmas \ref{ckhyperlemma} and \ref{findingF} it follows that there exists a set $F$ with $\#F=\text{dim}(\pk)$ such that for every $P \in \pk$ we have
		\[
		{\|P\|}_{L^{\infty}(B)} \lesssim w(E)^{-1}\max\limits_{(x,\beta)\in F}|\partial^{\beta} P(x)|.
		\]
		This implies that the mapping $P\longrightarrow (\partial^\beta P(x))_{(x,\beta)\in F}$ is an isomorphism from $\pk$ to $\mathbb{R}^{\text{dim}(\pk)}$, therefore for each $(x,\beta)\in F$, \eqref{interpolationeq} has a unique solution $P_{x,\beta}$. These polynomials satisfy \eqref{estimate for p}, and since $\#F=\text{dim}(\pk)$, they form a basis for $\pk$, and this finishes the proof.	 
	\end{proof}
	
	Our last lemma in this section is a simple generalization of boundedness of norms of atoms in $\hprn$ (see \cite{MR1232192} Ch.3 Further results 5.5).
	\begin{lemma}\label{atomlemma}
		Suppose $B$ is the unit ball of $\rn$, $f$ a distribution supported in $B$ and of the form $f=g+\sum\limits_{(x,\beta)\in F}c_{x,\beta}\partial^{\beta}\delta_x$ with $\infn{g}\le1$, $|c_{x,\beta}|\le 1$, and $F$ a nonempty finite set of pairs $(x,\beta)$ such that $x\in B$ and $|\beta|<n(1/p -1)$. Moreover, suppose all moments of $f$ vanishes up to order $N_p=[n(1/p -1)]$. Then $f\in \hprn$ and $\hprnn{f}\lesssim \#F$.
	\end{lemma}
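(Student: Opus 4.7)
The plan is to use the grand maximal function characterization of $\hprn$: fix $\varphi \in \cinftyc{B}$ with $\int \varphi = 1$ and set $Mf(y) := \sup_{t>0}|\langle f, \varphi_t(y-\cdot)\rangle|$. It suffices to show $\|Mf\|_{L^p(\rn)}^p \lesssim (\#F)^p$. I will split the integral into the near region $\{|y| \le 2\}$ and the far region $\{|y| > 2\}$, treat $g$ and each $c_{x,\beta}\partial^\beta\delta_x$ separately using the $p$-subadditivity $|a+b|^p \le |a|^p + |b|^p$, and then sum up.

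\textbf{Near region.} For the bounded part, $Mg(y) \lesssim \|g\|_\infty \le 1$, contributing $O(1)$. For each Dirac-derivative term, a direct computation gives $\langle \partial^\beta\delta_x, \varphi_t(y-\cdot)\rangle = (\partial^\beta\varphi_t)(y-x)$, which is supported in $|y-x| \le t$ and bounded by $C t^{-n-|\beta|}$. Taking the supremum over admissible $t \ge |y-x|$ yields $M(\partial^\beta\delta_x)(y) \lesssim |y-x|^{-n-|\beta|}$. The key observation is that the hypothesis $|\beta| < n(1/p-1)$ is equivalent to $p(n+|\beta|) < n$, which is precisely what makes $|y-x|^{-p(n+|\beta|)}$ locally integrable. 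So each Dirac term contributes $O(1)$ to the integral over $|y| \le 2$, and summing at most $\#F$ terms gives $\int_{|y| \le 2}|Mf|^p \lesssim \#F$.

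\textbf{Far region.} Here I invoke the vanishing moments. For $|y| > 2$, let $P_{y,t}(u)$ be the Taylor polynomial at $0$, of order $N_p$, of the function $u \mapsto \varphi_t(y-u)$. Since $f$ has vanishing moments up to order $N_p$, $\langle f, \varphi_t(y-\cdot)\rangle = \langle f, \varphi_t(y-\cdot) - P_{y,t}\rangle$. The Taylor remainder $R_{y,t}(u) := \varphi_t(y-u) - P_{y,t}(u)$ and its derivatives satisfy
\[
|\partial^\beta_u R_{y,t}(u)| \lesssim |u|^{N_p+1-|\beta|}\sup_{\xi}|\partial^{N_p+1}\varphi_t(y-\xi)|,\quad |\beta| \le N_p.
\]
The support condition forces $t \gtrsim |y|$ whenever the pairing is nonzero, so $\sup|\partial^{N_p+1}\varphi_t| \lesssim |y|^{-n-N_p-1}$. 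Pairing with $g$ yields $|\langle g, R_{y,t}\rangle| \lesssim |y|^{-n-N_p-1}$, and pairing with $c_{x,\beta}\partial^\beta\delta_x$ yields $|\partial^\beta R_{y,t}(x)| \lesssim |y|^{-n-N_p-1}$ since $|x| \le 1$. Summing, $Mf(y) \lesssim \#F \cdot |y|^{-n-N_p-1}$ on $|y| > 2$, and raising to the $p$-th power and integrating converges because $p(n+N_p+1) > n$ (equivalent to $N_p+1 > n(1/p-1)$, valid since $N_p = [n(1/p-1)]$), yielding $\int_{|y|>2}|Mf|^p \lesssim (\#F)^p$.

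\textbf{Main obstacle.} The subtle point is the tail bound for Dirac-derivative contributions: one must confirm that after taking $|\beta|$ derivatives of the Taylor remainder and evaluating at $x \in B$, the result still enjoys the full decay $|y|^{-n-N_p-1}$. This works because the hypothesis $|\beta| < n(1/p-1)$ forces $|\beta| \le N_p$ in both the integer and non-integer cases of $n(1/p-1)$, so the differentiated remainder still vanishes to positive order at $0$ and the factor $|x|^{N_p+1-|\beta|} \le 1$ causes no loss. Everything else is routine.
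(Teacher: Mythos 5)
Your proposal is correct and follows essentially the same route as the paper's own proof: the same local/nonlocal split of the maximal function, the bound $|y-x|^{-n-|\beta|}$ for the Dirac terms near the support, and the subtraction of the order-$N_p$ Taylor polynomial of $\varphi_t(y-\cdot)$ in the far region, with the differentiated remainder estimate $\lesssim |y|^{-n-N_p-1}$ and the same two integrability checks $p(n+|\beta|)<n$ and $p(n+N_p+1)>n$. The only difference is cosmetic (your region $|y|\le 2$ versus the paper's $3B$, and your explicit verification that $|\beta|\le N_p$, which the paper leaves implicit).
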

	
	\begin{proof}
		
		Let $\varphi \in \cinftyc{B}$, $0\le \varphi \le 1$ with $\int\varphi=1$. We estimate $M_\varphi(f)$ by following the usual local-nonlocal argument. For $x_0 \in 3B$ we have
		
		\[
		\varphi_t*f(x_0)=\varphi_t*g(x_0)+ \sum\limits_{(x,\beta)\in F}(-1)^\beta c_{x,\beta}t^{-n-|\beta|}(\partial^{\beta}\varphi)(\frac{x_0-x}{t})	= \textrm{I}+\textrm{II}.
		\]  
		Now for $t>0$ , $|\textrm{I}|\le 1$, and we note that in the second term $(\partial^{\beta}\varphi)(\frac{x_0-x}{t})\neq 0$ only if $t>|x-x_0|$, so this term is bounded by
		
		\[
		|\textrm{II}|\lesssim \sum\limits_{(x,\beta)\in F} |x-x_0|^{-n-|\beta|}.
		\]
		This gives us an estimate for the local part and it remains to estimate $M_\varphi(f)$ on $(3B)^c$, where we have to use cancellation properties of $f$. So this time assume $x_0\notin 3B$ and note that since $\varphi_t(x_0-\cdot)$ is supported in $B_t(x_0)$, $f*\varphi_t(x_0)\neq0$ only if $t>|x_0|/2$. To use the cancellation properties of $f$, let $P$ be the Taylor polynomial of $\varphi_t(x_0-\cdot)$ around $0$ and of order $N_p$. Then we have
		\begin{align*}
			&f*\varphi_t(x_0)=\left\langle f, \varphi_t(x_0-\cdot)\right\rangle=\left\langle f, \varphi_t(x_0-\cdot)-P \right\rangle\\
			&=\int g(y)(\varphi_t(x_0-y)-P(y))dy+\sum\limits_{(x,\beta)\in F}(-1)^\beta c_{x,\beta} \partial^\beta(\varphi_t(x_0-y)-P(y))|_{y=x}\\
			&=\textrm{I}+\textrm{II}.
		\end{align*}
		Now from the Taylor remainder theorem it follows that for $y\in B$ we have $$|\varphi_t(x_0-y)-P(y)|\le \infn{\nabla^{N_p+1}(\varphi_t)}\lesssim t^{-(N_p+1+n)},$$ 
		therefore $|\textrm{I}|\lesssim |x_0|^{-(N_p+1+n)}$. To estimate the second term we note that $\partial^{\beta}P$ is the $(N_p-|\beta|)$-th order Taylor polynomial of $\partial^\beta(\varphi_t(x_0-\cdot))$ around $0$, and this gives us	
		\[
		|\partial^\beta(\varphi_t(x_0-y)-P(y))|\lesssim \infn{\nabla^{N_p-|\beta|+1}(\partial^\beta(\varphi_t))}\lesssim t^{-(N_p+1+n)}\lesssim |x_0|^{-(N_p+1+n)},
		\]
		so we have $|\textrm{II}|\lesssim |x_0|^{-(N_p+1+n)}$. Next, by bringing the estimates for the local and nonlocal parts together we obtain
		
		\[
		M_{\varphi}(f)(x_0)\lesssim \left(1+ \sum\limits_{(x,\beta)\in F} |x-x_0|^{-n-|\beta|}\right)\mathbbm{1}_{3B}(x_0)+|x_0|^{-(N_p+1+n)}\mathbbm{1}_{(3B)^c}(x_0).
		\] 
		Now using the above estimate and our assumptions on $\beta$ and $N_p$ gives us
		\[
		\hprnn{f}=\lprnn{M_\varphi(f)}\lesssim \#F,
		\]
		which completes the proof.
	\end{proof}
	
	With the above lemmas in hand we are ready to prove the sufficiency part of the Theorem \ref{theorem1}.
	
	\begin{proof}[Proof of sufficiency] According to Lemma \ref{lemma0} it is enough to extend each $(p,\Omega)$-atom to an element of $\hprn$ with a uniform bound on its norm.\\
		
		Case $p=1$. Take a $(1,\Omega)-$ atom $g$ supported on a ball $B_r(x_0)$, with $r\approx d(x_0)$. Now, from the assumption there are positive constants $a$ and $\delta$ such that \eqref{h1cc} holds. Then we extend $g$ by putting a constant on $\Omega^c\cap B_{ar}(x_0)$, to create the required cancellation. So let
		\[
		\tilde{g}=g-c.\mathbbm{1}_{\Omega^c \cap B_{ar}(x_0)}, \quad c= \frac{\int g}{|\Omega^c \cap B_{ar}(x_0)|}.
		\]
		Here, we note that $\text{supp}(\tilde{g})\subset B_{ar}(x_0)$, $\int\tilde{g}=0$ and this function is bounded by
		
		\[
		\infn{\tilde{g}}\le \infn{g}\left(1+a^{-n}\frac{|B_{ar}(x_0)|}{|\Omega^c \cap B_{ar}(x_0)|} \right)\lesssim\infn{g}\left(1+a^{-n}\delta^{-1}\right)\lesssim_{a,\delta}|B_{ar}(x_0)|^{-1}.
		\]
		Hence $\tilde{g}$ is a constant multiple of a 1-atom so $\hrnn{\tilde{g}}\le c(a,\delta,n)$, and this finishes the proof of this case.\\
		
		Case $p=\frac{n}{k+n}$, $k=1,2,...$. Let $g$ be a $(p,\Omega)$-atom similar to the previous case, and let $a>1$, $\delta>0$ be the constants that satisfy \eqref{key}. By a translation and dilation, we bring everything to the unit ball and set	
		\[
		E=\frac{\left(\Omega^c\cap \bar{B}_{ar}\right)-x_0}{ar}, \quad g'=g(x_0+arx).
		\]
		Now	$E$ is a subset of the unit ball $B$ with $w(E)>a^{-1}\delta$, so let $F$ be a set provided by Lemma \ref{interpolationlemma}. Then we extend $g'$ by
		\[
		\widetilde{g'}=g'+\sum\limits_{(x,\beta)\in F}c_{x,\beta}\partial^{\beta}\delta_x,
		\]
		where the coefficient $c_{x,\beta}$ are chosen in way that $\widetilde{g'}$ has vanishing moments up to order $k$. To see this can be done, recall that from Lemma \ref{interpolationlemma} we may choose a basis of polynomials for $\pk$  $\left\{P_{x,\beta}\right\}$, such that $\partial^\beta (P_{x,\beta})(x)=1$ and $\partial^{\beta'} (P_{x,\beta})(x')=0$ when $(x,\beta)\neq(x', \beta')$. Then the cancellation condition for $\widetilde{g'}$ is equivalent to
		\[
		\left\langle\widetilde{g'},P_{x,\beta}\right\rangle=0 \quad (x,\beta)\in F,
		\]
		which gives us the following equation for the coefficients $c_{x,\beta}$:
		\begin{equation}\label{solvingc}
			\int g'(y)P_{x,\beta}(y)dy+\sum\limits_{(x',\beta')\in F}c_{x',\beta'}\left\langle\partial^{\beta'}\delta_{x'},P_{x,\beta}\right\rangle=0 \quad (x,\beta)\in F.
		\end{equation}
		Now we note that 
		\begin{equation}\label{cxbeta}
			\left\langle\partial^{\beta'}\delta_{x'},P_{x,\beta}\right\rangle=(-1)^{|\beta'|}\partial^{\beta'}(P_{x,\beta})(x'),
		\end{equation}
		so from \eqref{solvingc}, \eqref{cxbeta} and Lemma \ref{interpolationlemma} we have
		\begin{equation}\label{cxbetaequation}
			c_{x,\beta}=(-1)^{|\beta|+1}\int g'(y)P_{x,\beta}(y)dy.
		\end{equation}
		This shows that $\widetilde{g'}$ depends linearly on $g$, $\left\langle \tilde{g'},P	
		\right\rangle =0$ for every $P \in \pk$, and moreover 
		
		\begin{align*}
			&|c_{x,\beta}|\le |B|\linfno{g'}{B}\linfno{P_{x,\beta}}{B}\lesssim
			\linfno{g'}{B} w(E)^{-1}\lesssim a\delta^{-1} \linfno{g'}{B}.
		\end{align*}		
		Now $\widetilde{g'}/\linfno{g'}{B}$ satisfies all the required conditions of Lemma \ref{atomlemma} and by applying that we get
		
		\[
		\hprnn{\tilde{g'}}\lesssim \linfno{g'}{B} \lesssim_{a,\delta} |B_{ar}(x_0)|^{\frac{-1}{p}}.
		\]		
		Finally, let $\tilde{g}=\widetilde{g'}(\frac{x-x_0}{ar})$ which is an extension of $g$	with $\hprnn{\tilde{g}}\le c(n,k,a,\delta)$, and this completes the proof of the second case.\\
		
		Case $p\neq\frac{n}{n+k}$, $k=0,1,2,...$. This case is similar the previous one and even simpler. This time we can take $a$ to be any number such that $E={\Omega}^c\cap B_{ar}\ne\emptyset$ and after a dilation and translation, we pick a point $x\in E$ and extend $g'$ by 
		\[
		\widetilde{g'}=g'+\sum\limits_{|\beta|\le [n(1/p-1)]}c_{\beta}\partial^\beta \delta_x, \quad c_\beta=(-1)^{|\beta|+1}\int g'(y)\frac{(y-x)^\beta}{\beta!} dy.
		\]
		The rest is just like the previous case.\\
		
		In all the above cases, the method of extension is linear in terms of $g$ so from Lemma \ref{lemma0}, there is a linear extension operator if the conditions of Theorem \ref{theorem1} hold.
	\end{proof}
	
	\section{Proof of necessity}
	Now that we've proved the conditions of Theorem \ref{theorem1} are sufficient, we have to show they are necessary as well. To do this, we need to construct some functions in the dual of $\hprn$ to prevent any bounded extension when those conditions are not satisfied. We need some lemmas and our first one is a simple consequence of a theorem proved by Uchiyama. We give it below and its proof is found in \cite{MR0651495,MR0658065}.
	
	\begin{theorem*}[Uchiyama]
		Let $\lambda>0$ and $E$ , $B$ be two subsets of $\rn$ such that for any cube $Q$ we have
		
		\begin{equation}\label{uchi}
			\min\left\{\frac{|Q\cap E|}{|Q|},\frac{|Q\cap B|}{|Q|}\right\} \le 2^{-2n\lambda}.
		\end{equation}
		Then there exists a function $f\in \bmorn$ such that $f=1$ on $B$ and $f=0$ on $E$ with $\bmornn{f}\lesssim\frac{1}{\lambda}$.
	\end{theorem*}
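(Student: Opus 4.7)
The plan is to construct $f$ as the almost-everywhere limit of a dyadic martingale obtained from a stopping-time procedure on one dyadic grid, and then transfer the estimate to $\bmorn$ using the classical equivalence with a finite intersection of translated dyadic $BMO$ spaces (a shift-averaging trick). The heart of the matter is therefore the dyadic construction.

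First I would replace $E$ by $E\setminus B$, preserving the hypothesis while making the two sets disjoint. Fix a dyadic grid $\mathcal{D}$, and for each $Q\in\mathcal{D}$ write $\rho_B(Q)=|Q\cap B|/|Q|$, $\rho_E(Q)=|Q\cap E|/|Q|$; the hypothesis reads $\min\{\rho_B(Q),\rho_E(Q)\}\le 2^{-2n\lambda}$, which for $\lambda$ above a dimensional threshold forces one of the two densities in any cube to be exponentially small. I would then run a Calder\'on--Zygmund stopping-time procedure: starting from a large ambient cube $Q_0$, recursively extract the maximal dyadic sub-cubes $R$ of each currently "alive" cube for which $\rho_B(R)\ge 1/2$ or $\rho_E(R)\ge 1/2$, and call these $B$- or $E$-stopping cubes; by the hypothesis at most one of these labels applies to any given $R$.

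Using this stopping family I would build a martingale $(f_k)$ by prescribing its averages: on a $B$-stopping cube the conditional mean is nudged towards $1$, on an $E$-stopping one towards $0$, and on cubes that are not (yet) stopping the value inherits that of the parent. The procedure is iterated inside every stopping cube, possibly with the roles of $B$ and $E$ interchanged. By construction the jumps at each stopping generation are bounded in $[0,1]$, and by Lebesgue differentiation combined with the hypothesis, the limit $f=\lim_k f_k$ exists a.e.\ and equals $1$ a.e.\ on $B$ and $0$ a.e.\ on $E$; a modification on a null set yields pointwise equality on $B\cup E$.

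The main obstacle is the estimate $\bmornn{f}\lesssim 1/\lambda$. The decisive observation is that each successive generation of stopping sub-cubes inside a fixed cube $Q$ occupies a fraction at most $2^{-2n\lambda}$ of its parent, since the transition from one label to the other requires the opposite set to regain density $\ge 1/2$, whereas the hypothesis caps its density by $2^{-2n\lambda}$. Consequently a telescoping computation of the mean oscillation on $Q$ becomes a geometric series with common ratio $q=2^{-2n\lambda}$, whose sum behaves like $1/(1-q)\sim 1/(n\lambda\log 2)\sim 1/\lambda$; this is precisely the exponent predicted by the John--Nirenberg inequality, which also makes the hypothesis necessary. Averaging the resulting dyadic bound over $2^n$ shifted grids upgrades the estimate to $\bmorn$ and completes the proof.
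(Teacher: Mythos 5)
First, a remark on the comparison you asked for: the paper does not prove this statement at all. It is quoted as a known theorem of Uchiyama, with the proof deferred to the cited references. So your attempt has to stand on its own, and as written it has a fatal gap in the $BMO$ estimate.

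Your sparseness observation is correct: if $R$ is a $B$-stopping cube, then $\rho_B(R)\ge 1/2$ forces $\rho_E(R)\le 2^{-2n\lambda}$, so the next generation of ($E$-)stopping cubes covers at most a fraction $2^{1-2n\lambda}$ of $R$. But sparseness of the flip regions does not make the mean oscillation small, and the arithmetic in your last step conflates two very different quantities: with $q=2^{-2n\lambda}$ one has $\sum_k q^k=1/(1-q)\to 1$ as $\lambda\to\infty$, whereas $1/(n\lambda\log 2)\to 0$; the quantity you want is $1/\log(1/q)$, not $1/(1-q)$, and it does not arise from a geometric series of oscillations. More concretely, the cubes on which your function oscillates badly are not the stopping cubes but intermediate ones. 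Take $n=1$, $B=[0,1]$, $E=\mathbb{R}\setminus[-N,N+1]$ with $N=2^{2\lambda}$: the hypothesis holds with constant $2^{-2\lambda}$, yet your procedure produces (up to sparse corrections) a $\{0,1\}$-valued function, essentially the indicator of a neighbourhood of $[0,1]$, and such a function has mean oscillation of order $1$ on a suitable interval. This is forced by John--Nirenberg: any $f$ taking only values near $0$ and $1$, equal to $1$ on $B$ and $0$ on $E$ with both sets of positive measure, has $\bmornn{f}$ bounded below by an absolute constant. If instead you read ``nudged towards'' as a small increment, then with the single threshold $\rho\ge 1/2$ the label can change only once per flip, $f$ never actually reaches the values $1$ on $B$ and $0$ on $E$, and the construction fails for the opposite reason.

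The missing idea, which is the heart of Uchiyama's proof, is that $f$ must descend from $1$ to $0$ through roughly $2n\lambda$ intermediate levels: the stopping rule is based on the density of the opposite set doubling (equivalently the own density halving), so that between density $2^{-2n\lambda}$ and density $1/2$ there are about $2n\lambda$ stopping generations, at each of which $f$ changes by about $1/(2n\lambda)$. Each generation is a Carleson-packed family, each contributes oscillation $O(1/\lambda)$ with bounded overlap, and only then does one obtain $\bmornn{f}\lesssim 1/\lambda$ together with $f=1$ on $B$ and $f=0$ on $E$. The passage from one dyadic grid to all cubes and the identification of the martingale limit on $B\cup E$ up to null sets are additional loose ends, but they are routine compared with this.
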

	
	\begin{lemma}\label{bmolemma}
		Let $0<\varepsilon\le \frac{1}{2}$, $a>3$ and $B$ the unit ball of $\rn$. Then for every set $E\subset (2B)^c $ such that $|E\cap aB|<\varepsilon$, there exists a function $f\in \bmorn$ with the following properties:
		\begin{itemize}
			\item $f=0$  on $E$
			\item $f=1 $ on $B$
			\item $\bmornn{f}\lesssim \frac{1}{\min\left\{\log a, \log\varepsilon^{-1}\right\}}$
		\end{itemize}
	\end{lemma}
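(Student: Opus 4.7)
The plan is to apply Uchiyama's theorem with a carefully chosen $\lambda$. Specifically, I will show that for $\lambda = c_n \min\{\log a,\log\varepsilon^{-1}\}$ with a sufficiently small constant $c_n$ depending only on $n$, the hypothesis
\[
\min\left\{\frac{|Q\cap E|}{|Q|},\frac{|Q\cap B|}{|Q|}\right\}\le 2^{-2n\lambda}
\]
holds for every cube $Q$. The resulting function $f$ from Uchiyama's theorem then has $\|f\|_{BMO(\mathbb{R}^n)}\lesssim 1/\lambda$, which is exactly the bound claimed.

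To verify the hypothesis, note first that if $Q$ fails to meet either $B$ or $E$ there is nothing to check. So assume $Q\cap B\neq\emptyset$ and $Q\cap E\neq\emptyset$. Since $E\subset(2B)^c$, the distance from $B$ to $E$ is at least $1$; hence $Q$ has diameter at least $1$, so its side length $\ell$ satisfies $\ell\ge 1/\sqrt{n}$. I will split on the size of $\ell$ relative to $a$, with threshold $\ell_0=a/(2\sqrt{n})$.

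In the small case $\ell\le \ell_0$, because $Q$ meets $B$ every point of $Q$ lies within distance $\sqrt{n}\ell\le a/2$ of $B$, so $Q\subset (1+a/2)B\subset aB$ (using $a>3$). Then $|Q\cap E|\le |E\cap aB|<\varepsilon$, and since $|Q|\ge n^{-n/2}$ we get $|Q\cap E|/|Q|\le n^{n/2}\varepsilon$. In the large case $\ell>\ell_0$, we have $|Q|\ge (a/(2\sqrt{n}))^n$, so $|Q\cap B|/|Q|\le |B|/|Q|\lesssim_n a^{-n}$. Combining the two cases,
\[
\min\left\{\frac{|Q\cap E|}{|Q|},\frac{|Q\cap B|}{|Q|}\right\}\le C_n\max\{\varepsilon,a^{-n}\}.
\]

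It remains to choose $\lambda$. The inequality $C_n\max\{\varepsilon,a^{-n}\}\le 2^{-2n\lambda}$ is equivalent to $\lambda\le \frac{1}{2n\log 2}\log\bigl(C_n^{-1}\min\{\varepsilon^{-1},a^n\}\bigr)$, and taking $\lambda$ proportional to $\min\{\log a,\log \varepsilon^{-1}\}$ with a small enough $n$-dependent constant achieves this (using $a>3$ and $\varepsilon\le 1/2$ to absorb the $C_n$ factor). Uchiyama's theorem then produces the desired $f$. The only real obstacle here is bookkeeping of the geometric constants so that the two subcases above give the right dependence on $a$ and $\varepsilon$ simultaneously; once the threshold $\ell_0\approx a$ is fixed, everything else is routine.
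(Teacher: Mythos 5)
Your proposal is correct and follows essentially the same route as the paper: both verify Uchiyama's hypothesis by the same dichotomy (your side-length threshold $\ell_0\approx a$ is equivalent to the paper's split into $Q\subset aB$ versus $Q\cap(aB)^c\neq\emptyset$), obtain the same two bounds $\lesssim_n\varepsilon$ and $\lesssim_n a^{-n}$, and then take $\lambda\approx\min\{\log a,\log\varepsilon^{-1}\}$. If anything, your bookkeeping of the dimensional constants (e.g.\ $\ell\ge 1/\sqrt{n}$ rather than $\ell\ge 1$) is slightly more careful than the paper's.
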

	
	\begin{proof}
		Let us check the condition of the above theorem for $B$ and $E$. If a cube $Q$ doesn't intersect one of the sets \eqref{uchi} holds for any $\lambda>0$, so assume it intersects both of them. Then there are two possibilities either $Q\cap(aB)^c\neq\emptyset$ or $Q\subset aB$. In the first case $l(Q)\ge\frac{(a-2)}{\sqrt{n}}$ which implies $\frac{|Q\cap B|}{|Q|}\lesssim a^{-n}$, and in the second one we have $l(Q)\ge 1$, which gives us $\frac{|Q\cap A|}{|Q|}= \frac{|Q\cap E\cap aB|}{|Q|} \le \varepsilon$. So for $\lambda \approx \min\left\{\log a,\log\varepsilon^{-1}\right\}$ \eqref{uchi} holds and the claim follows from the theorem.
	\end{proof}
	
	To construct similar functions of the above lemma in $\hlip{k}{n}$, we need a couple of facts which are well known and we bring here as lemmas. Here it's convenient to write $\text{BMO}(\mathbb{R})=\dot{\Lambda}^{0}(\mathbb{R})$.
	
	\begin{lemma}\label{intlemma}
		Let $f\in\dot{\Lambda}^{k}(\mathbb{R})$ and  $F(x)=\int_{0}^{x}f(t)dt$,  Then $F \in\dot{\Lambda}^{k+1}(\mathbb{R}) $ with $\|F\|_{\dot{\Lambda}^{k+1}(\mathbb{R})} \lesssim \|f\|_{\dot{\Lambda}^{k}(\mathbb{R})}$ for $k=0,1,2,...$.
	\end{lemma}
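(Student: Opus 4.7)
The plan is to express the iterated difference $\Delta_h^{k+2}F$ directly in terms of iterated differences of $f$, and then invoke the defining norm of $\dot{\Lambda}^k(\mathbb{R})$. The key identity, proved by induction on the number of differences applied, is
\[
\Delta_h^{k+2} F(x) \;=\; \int_{0}^{h} \Delta_h^{k+1} f(x+s)\,ds.
\]
The base case $\Delta_h F(x)=\int_0^h f(x+s)\,ds$ is the fundamental theorem of calculus; applying $\Delta_h$ once more and pulling it inside the integral propagates the identity.

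When $k\ge 1$, the definition of $\dot{\Lambda}^k(\mathbb{R})$ gives the pointwise bound $|\Delta_h^{k+1}f(y)|\le \|f\|_{\dot{\Lambda}^k(\mathbb{R})}|h|^{k}$ for every $y$, so the identity yields
\[
|\Delta_h^{k+2}F(x)| \;\le\; \|f\|_{\dot{\Lambda}^k(\mathbb{R})}\,|h|^{k+1}
\]
uniformly in $x$. Dividing by $|h|^{k+1}$ and taking the supremum over $h\ne 0$ gives exactly $\|F\|_{\dot{\Lambda}^{k+1}(\mathbb{R})}\lesssim \|f\|_{\dot{\Lambda}^k(\mathbb{R})}$.

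The case $k=0$ is the main subtlety, since $\|f\|_{\mathrm{BMO}(\mathbb{R})}$ does not provide pointwise control of $\Delta_h f$. Here I would rewrite
\[
\Delta_h^{2}F(x) \;=\; \int_{x+h}^{x+2h} f - \int_{x}^{x+h} f \;=\; h\bigl(\mathrm{avg}_{I_2}f \;-\; \mathrm{avg}_{I_1}f\bigr),
\]
with $I_1=[x,x+h]$ and $I_2=[x+h,x+2h]$. Applying the standard comparison of averages $|\mathrm{avg}_{J}f - \mathrm{avg}_{I}f|\le (|I|/|J|)\,\|f\|_{\mathrm{BMO}(\mathbb{R})}$ with $I=I_1\cup I_2$ and $J=I_i$, each average difference is at most $2\|f\|_{\mathrm{BMO}(\mathbb{R})}$, so $|\Delta_h^{2}F(x)|\le 4|h|\,\|f\|_{\mathrm{BMO}(\mathbb{R})}$. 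Hence $\|F\|_{\dot{\Lambda}^{1}(\mathbb{R})}\lesssim \|f\|_{\mathrm{BMO}(\mathbb{R})}$.

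The case $h<0$ is handled by the obvious symmetry (reinterpret $\int_0^h$ as $-\int_h^0$, or equivalently use $\Delta_{-h}$). The only delicate point is the case $k=0$, which is resolved by the elementary BMO comparison of averages on nested intervals described above; all remaining $k\ge 1$ cases reduce to a one-line pointwise estimate.
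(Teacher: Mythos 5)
Your proposal is correct and follows essentially the same route as the paper: the identity $\Delta_h^{k+2}F(x)=\int_0^h\Delta_h^{k+1}f(x+s)\,ds$ with the pointwise bound from the $\dot{\Lambda}^k$ norm for $k\ge 1$, and the rewriting of $\Delta_h^2F$ as a difference of averages over adjacent intervals controlled by the BMO norm for $k=0$. The only difference is that you spell out the BMO comparison of averages and the sign of $h$ explicitly, which the paper leaves implicit.
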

	
	\begin{proof}
		For $k=0$ we have
		
		\[
		|\Delta^2_hF(x)/h|=\left|1/h\int_{x+h}^{x+2h}f-1/h\int_{x}^{x+h}f\right|\lesssim\|f\|_{\text{BMO}(\mathbb{R})},
		\]
		and for $k\ge1$ we have
		
		\[
		\Delta^{k+2}_hF(x)=\Delta^{k+1}_h(\Delta^1_hF)(x)=\Delta^{k+1}_h\int_{0}^{h}f(x+t)dt=\int_{0}^{h}(\Delta^{k+1}_hf)(x+t)dt,
		\]
		which implies 
		\[
		|\frac{\Delta^{k+2}_hF(x)}{h^{k+1}}|\le \frac{1}{|h|} \int_{0}^{h} |\frac{\Delta^{k+1}_h}{h^k}|(x+t)dt\le\|f\|_{\dot{\Lambda}^{k}(\mathbb{R})}.
		\]
		So in any case we get $\|F\|_{\dot{\Lambda}^{k+1}(\mathbb{R})} \lesssim \|f\|_{\dot{\Lambda}^{k}(\mathbb{R})}$.
	\end{proof}
	
	\begin{lemma}\label{cuttinglemma}
		Let $k$ be a positive integer, $f\in \hlip{k}{n}$ and $g\in \ckrn$. Then $fg\in \hlip{k}{n}$ and $\hlipn{fg}{k}{n} \lesssim \sum_{l=1}^{k}\hlipn{f}{l}{n}\infn{\nabla^{k-l}(g)}+\infn{f}\infn{\nabla^{k}(g)}$.
	\end{lemma}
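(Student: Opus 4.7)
The plan is to expand $\Delta_h^{k+1}(fg)$ via the discrete Leibniz rule and estimate each resulting term using the hypothesis on $f$ and $g$. Writing $E_h F(x):=F(x+h)$, the identity $\Delta_h(FG)=(E_hF)(\Delta_h G)+(\Delta_h F)G$ iterated $k+1$ times gives, by a standard induction,
\[
\Delta_h^{k+1}(fg)(x)=\sum_{j=0}^{k+1}\binom{k+1}{j}\bigl(\Delta_h^{j}f\bigr)\bigl(x+(k+1-j)h\bigr)\,\bigl(\Delta_h^{k+1-j}g\bigr)(x).
\]
Dividing by $|h|^{k}$ reduces the proof to a uniform bound on each of the $k+2$ summands.

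For the factors involving $g$, since $g\in C^k(\rn)$ we have the elementary pointwise bound $|\Delta_h^{m}g(x)|\le |h|^{m}\infn{\nabla^{m}g}$ for $0\le m\le k$ (write $\Delta_h^{m}g(x)$ as an iterated integral of $(h\cdot\nabla)^{m}g$). For the top-order difference, although $g$ need not be $C^{k+1}$, the factorization $\Delta_h^{k+1}g=\Delta_h(\Delta_h^{k}g)$ together with the previous inequality gives $|\Delta_h^{k+1}g(x)|\le 2|h|^{k}\infn{\nabla^{k}g}$. For the factors involving $f$, I will use $|\Delta_h^{0}f|\le \infn{f}$, $|\Delta_h^{1}f|\le 2\infn{f}$, and, for $2\le j\le k+1$, the definition of $\dot{\Lambda}^{j-1}(\rn)$ (which uses differences of order $[j-1]+1=j$) to obtain $|\Delta_h^{j}f(y)|\le |h|^{j-1}\hlipn{f}{j-1}{n}$.

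Substituting these bounds and dividing by $|h|^{k}$, the $j=0$ term contributes $\lesssim \infn{f}\infn{\nabla^{k}g}$; the $j=1$ term contributes $\lesssim \infn{f}\infn{\nabla^{k}g}$; and for $2\le j\le k+1$, setting $l:=j-1\in\{1,\dots,k\}$, the remaining terms contribute $\lesssim \hlipn{f}{l}{n}\infn{\nabla^{k-l}g}$. Summing and taking the supremum in $h$ yields exactly
\[
\hlipn{fg}{k}{n}\lesssim\sum_{l=1}^{k}\hlipn{f}{l}{n}\infn{\nabla^{k-l}g}+\infn{f}\infn{\nabla^{k}g},
\]
as required.

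The only non-routine point is the estimate $|\Delta_h^{k+1}g(x)|\lesssim |h|^{k}\infn{\nabla^{k}g}$: one cannot naively gain a full power $|h|^{k+1}$ since $g$ is only in $C^{k}$, so the trick of writing $\Delta_h^{k+1}=\Delta_h\circ\Delta_h^{k}$ and absorbing the outer $\Delta_h$ as a mere bounded operator is essential. Everything else is a direct application of the Leibniz expansion together with the defining inequalities of $\dot{\Lambda}^{l}(\rn)$.
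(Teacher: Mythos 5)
Your proof is correct and follows essentially the same route as the paper's: iterate the one-step discrete Leibniz rule, bound the differences of $f$ by $\infn{f}$ (orders $0,1$) or by $\hlipn{f}{l}{n}|h|^{l}$ (order $l+1\ge 2$), bound the differences of $g$ by $\infn{\nabla^{m}g}|h|^{m}$, and handle the top-order term via $\Delta_h^{k+1}g=\Delta_h(\Delta_h^{k}g)$. The only difference is cosmetic: you record the expansion as an explicit binomial identity, whereas the paper only writes a generic sum with unspecified integer coefficients and translates.
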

	
	\begin{proof}
		Let $\tau^{h}g(x)=g(x+h)$, so for $\Delta^1_h(fg)$ we have
		\begin{equation}\label{delta1}
			\Delta^1_h(fg)=\Delta^1_h(f)\tau^{h}g+ f \Delta^1_h(g).
		\end{equation}
		By taking the difference operator $k+1$ times and applying \eqref{delta1} each time we can see that $\Delta^{k+1}_h(fg)$ has the form
		\begin{equation}\label{diffeerncekfg}
			\Delta^{k+1}_h(fg)= \sum\limits_{-1\le l \le k}\sum\limits_{0\le m,s\le k+1} c_{l,k,m,s}\tau^{mh} \Delta^{l+1}_h(f)\tau^{sh}\Delta^{k-l}_h(g),
		\end{equation}
		where in this formula $\Delta^0_h(f)=f$ and $c_{l,k,m,s}$ are some integers. Now for $1\le l\le k$ we have 
		\[
		|\Delta^{l+1}_h(f)|\le\|f\|_{\dot{\Lambda}^{l}(\mathbb{R})}|h|^l, \quad |\Delta^{k-l}_h(g)|\lesssim \infn{\nabla^{k-l}(g)}|h|^{k-l},
		\]
		where $\infn{\nabla^0(g)}=\infn{g}$,
		and for $l=-1,0$ we have
		\[
		|\Delta^{l+1}_h(f)|\lesssim \infn{f}, \quad |\Delta^{k-l}_h(g)|\lesssim \infn{\nabla^{k}(g)}|h|^{k}.
		\]
		Putting these estimates in \eqref{diffeerncekfg} completes the proof.
		
	\end{proof}
	One more property of $\hlip{k}{n}$ that is needed here is the inequality
	\begin{equation}\label{interpolationinequality}
		\hlipn{f}{l}{n} \lesssim \infn{f}^{1-\frac{l}{k}}\hlipn{f}{k}{n}^{\frac{l}{k}} \quad 1\le l \le k,
	\end{equation} 
	which is a consequence of the fact that $\hlip{l}{n}$ is the interpolation space between $L^{\infty}(\rn)$ and $\hlip{k}{n}$ (see \cite{MR4567945} Theorem 17.30).
	
	\begin{lemma}\label{lip1lemma}
		Suppose $0<\varepsilon<1/4$, $a>4$ and let $I\subset [-a,a]$ be an interval with $|I|<2\varepsilon$. Then for each $k=1,2,3,...$ there exists a function $f_k \in \hlip{k}{1}$ with the following properties:
		
		\begin{itemize}
			\item $f_k=0$ on $I\cup[-a,a]^c$
			\item $|f_k|\gtrsim \min\{\log a, \log \varepsilon^{-1}\}$ on  $J=[x_0-1/2,x_0+1/2]^c\cap[-1,1]$
			\item $\infn{f_k}\lesssim a^k$ 
			\item $\hlipn{f_k}{k}{1}\lesssim 1$
		\end{itemize}
	\end{lemma}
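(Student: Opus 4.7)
The plan is to build $f_k$ as the $k$-fold primitive, based at the midpoint $x_0$ of $I$, of a BMO function $v$ on $\mathbb{R}$ tailored so that $v=0$ on $I\cup[-a,a]^c$ and so that $v$ carries enough vanishing moments for its $k$-th primitive itself to be supported in $[-a,a]$. Write $C_\varepsilon:=\min\{\log a,\log\varepsilon^{-1}\}$ and let $\alpha,\beta$ be the endpoints of $I$, so that $x_0=(\alpha+\beta)/2$; we may assume $x_0\in[-1,1]$ (the other case is analogous). The main building block of $v$ is the antisymmetric log-tent
\[
v_0(t)\;=\;\operatorname{sgn}(t-x_0)\cdot\min\!\Bigl(\log\tfrac{|t-x_0|}{\varepsilon},\;\log\tfrac{a}{|t-x_0|}\Bigr)\qquad\text{on }\varepsilon\le|t-x_0|\le a,
\]
extended by zero elsewhere. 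Since $v_0$ is a continuous truncation of $\operatorname{sgn}(t-x_0)\log|t-x_0|$, one has $\|v_0\|_{\text{BMO}(\mathbb{R})}\lesssim 1$, and a direct estimate of $\int_{x_0+\varepsilon}^{x_0+1/2}\log((t-x_0)/\varepsilon)\,dt$ shows that even the single primitive of $v_0$ already has size $\gtrsim C_\varepsilon$ on $J$.

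In order for $f_k(x):=\tfrac{1}{(k-1)!}\int_{x_0}^{x}(x-t)^{k-1}v(t)\,dt$ to vanish outside $[-a,a]$, the function $v$ must satisfy the $2k$ moment conditions
\[
\int_{-a}^{x_0}(t-x_0)^j v(t)\,dt\;=\;\int_{x_0}^{a}(t-x_0)^j v(t)\,dt\;=\;0,\qquad 0\le j\le k-1,
\]
because on each ray $[a,\infty)$ and $(-\infty,-a]$ the iterated Taylor formula expresses $f_k$ as a polynomial of degree $<k$ whose coefficients are precisely these moments. We therefore augment $v_0$ to $v=v_0+\sum_\ell c_\ell w_\ell$, where each $w_\ell$ is a BMO bump produced by the one-dimensional version of Uchiyama's theorem (as in the proof of Lemma~\ref{bmolemma}), supported in a large strip of $[-a,a]\setminus(I\cup J)$ lying between $J$ and the boundary $\pm a$, with $\|w_\ell\|_{\text{BMO}(\mathbb{R})}\lesssim 1/C_\varepsilon$. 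Placing the supports of the $w_\ell$ at $2k$ well-separated positions makes the corresponding moment matrix nondegenerate; Cramer's rule then produces coefficients with $|c_\ell|\lesssim C_\varepsilon$, and consequently $\|v\|_{\text{BMO}(\mathbb{R})}\lesssim 1$.

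With $v$ in place the properties of $f_k$ follow quickly. Vanishing of $v$ on $I$ together with $f_k^{(j)}(x_0)=0$ for $j<k$ forces $f_k\equiv 0$ on $I$ (a polynomial of degree $<k$ vanishing to order $k$ at $x_0$), and the moment conditions force $f_k\equiv 0$ on $[-a,a]^c$. The bound $\|f_k\|_{\dot{\Lambda}^k(\mathbb{R})}\lesssim\|v\|_{\text{BMO}(\mathbb{R})}\lesssim 1$ is obtained by $k$ applications of Lemma~\ref{intlemma}. The sup-norm estimate $\|f_k\|_\infty\lesssim a^k$ is then automatic: for $x\in[-a,a]$ and $|h|=2a$, the $(k+1)$-th difference $\Delta_h^{k+1}f_k(x)=\sum_{j=0}^{k+1}(-1)^{k+1-j}\binom{k+1}{j}f_k(x+jh)$ reduces to $(-1)^{k+1}f_k(x)$ since every other $x+jh$ lies outside $[-a,a]$ where $f_k=0$, so $|f_k(x)|\le\|f_k\|_{\dot{\Lambda}^k(\mathbb{R})}|h|^k\lesssim a^k$. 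Finally, the lower bound $|f_k|\gtrsim C_\varepsilon$ on $J$ is obtained from direct estimation on the log-tent piece $v_0$, together with the observation that the correction terms $c_\ell w_\ell$ are localized outside $J$ and affect the value of the primitive there only through their lower-order moments. The main obstacle is the coupled control of the corrections: the supports of the $w_\ell$ must be placed in the buffer region $[-a,a]\setminus(I\cup J)$ guaranteed by the hypothesis $a>4$ in such a way that the moment matrix is well-conditioned, the coefficients $c_\ell$ remain of size $\lesssim C_\varepsilon$, and the corrections do not cancel out the large contribution of $v_0$ to the primitive on $J$.
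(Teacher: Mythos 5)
Your overall architecture (take a BMO function $v$ vanishing on $I\cup[-a,a]^c$, form the $k$-fold primitive based at $x_0$, and use Lemma \ref{intlemma} $k$ times to get $\hlipn{f_k}{k}{1}\lesssim\|v\|_{\mathrm{BMO}}$) is sound, and your sup-norm argument via $\Delta_h^{k+1}f_k(x)=(-1)^{k+1}f_k(x)$ for $|h|=2a$ is a nice touch. But the proof has a genuine gap at its central step: the correction scheme that is supposed to kill the $2k$ one-sided moments. First, ``placing the supports at well-separated positions makes the moment matrix nondegenerate; Cramer's rule then produces coefficients with $|c_\ell|\lesssim C_\varepsilon$'' does not follow: nondegeneracy gives you \emph{finite} coefficients, not coefficients bounded uniformly in $a$, $\varepsilon$ and the position of $I$. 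You would need a quantitative lower bound on the determinant relative to its cofactors, and nothing in the proposal supplies one. Second, the bumps $w_\ell$ themselves are not constructed. Uchiyama's theorem produces functions taking prescribed values $0$ and $1$ on two sets; it does not hand you compactly supported functions with prescribed moments. There is a real tension here: a function $w$ supported in an interval of length $L$ with $\|w\|_{\mathrm{BMO}}\le\eta$ satisfies $|\int w|\lesssim\eta L$ (compare averages over the support interval and its double), so with $\eta\lesssim 1/C_\varepsilon$ and $|c_\ell|\lesssim C_\varepsilon$ each correction can contribute at most $O(a^{j+1})$ to the $j$-th moment --- which is exactly the size of the moments of $v_0$ you must cancel, with no room to spare. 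The scheme is therefore marginal and cannot be waved through; the constants have to be tracked, and you acknowledge as much in your closing sentence without resolving it. A smaller issue: your ``the other case is analogous'' for $x_0\notin[-1,1]$ is not --- there the integration path from $x_0$ to $J=[-1,1]$ crosses the buffer region where the $w_\ell$ live, and their contribution to $f_k$ on $J$ is a polynomial with coefficients of size up to $a^k$, which could destroy the lower bound $|f_k|\gtrsim C_\varepsilon$.

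The paper avoids the moment problem entirely, and you may want to compare. It iterates the two-step operation ``integrate from $x_0$, then multiply by the cutoff $\varphi_a(x)=\varphi(x/a)$'': the support condition is enforced by the cutoff rather than by moment cancellation, so no linear system ever appears. The price is that one must control $\hlipn{F_k\varphi_a}{k}{1}$, and this is done by the Leibniz-type formula for iterated differences (Lemma \ref{cuttinglemma}) combined with the interpolation inequality \eqref{interpolationinequality}: one has $\hlipn{F_k}{l}{1}\lesssim\infn{F_k}^{1-l/k}\hlipn{F_k}{k}{1}^{l/k}\lesssim a^{k-l}$, which exactly matches $\infn{\varphi_a^{(k-l)}}\lesssim a^{l-k}$. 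The lower bound on $J$ then follows because each $f_{k-1}$ changes sign only at $x_0$, so no cancellation occurs in the next integration. If you want to keep your route, the fix is to make the moment system explicitly well-conditioned (e.g.\ construct the $w_\ell$ as rescaled log-tents on dyadically separated intervals so the rescaled matrix is diagonally dominant) and to verify by computation that the resulting $c_\ell$ obey $|c_\ell|\lesssim C_\varepsilon$; as written, that verification is the whole difficulty and it is missing.
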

	
	\begin{proof}
		Suppose $x_0$ is the mid point of $I$ and consider the function $$g(t)=\min\left\{\log^+|\frac{t-x_0}{\varepsilon}|,\log^+|\frac{a}{t}|\right\},$$
		which is nonnegative, vanishes on $I\cup [-a,a]^c$, $g\ge \min \{\log a, \log\frac{\varepsilon^{-1}}{4}\}$ on the interval $[x_0-1/4,x_0+1/4]^c\cap[-1,1]$ and also $\|g\|_{\text{BMO}}\lesssim 1$. Now let $F_1(x)=\int_{x_0}^{x}g(t)dt$ which vanishes on $I$ and  $|F_1|\gtrsim\min \{\log a, \log\varepsilon^{-1}\} $ on $J=[x_0-1/2,x_0+1/2]^c\cap[-1,1]$. Also, according to Lemma \ref{intlemma} we have $\hlipn{F_1}{1}{1}\lesssim 1$ and we note that
		\[
		|F_1(x)|\le \int_{-a}^{a} g(t)dt\le\int_{-a}^{a} \log^+|\frac{a}{t}|dt\lesssim a.
		\]
		Now let $\varphi\in \cinftyc{[-1,1]}$ such that $0\le \varphi \le 1$, $\varphi=1$ on $[-1/2,1/2]$ and set $\varphi_a(x)=\varphi(x/a)$. Then the function $f_1(x)=F_1(x)\varphi_a(x)$ has the following properties:
		\begin{itemize}
			\item $f_1=0$ on $I\cup[-a,a]^c$
			\item $f_1$ is positive on the right hand side of $x_0$ and negative on the other side  with $|f_1|\gtrsim\min \{\log a, \log\varepsilon^{-1}\}$ on $J$
			\item $\infn{f_1}\lesssim a$
			\item $\hlipn{f_1}{1}{1}\lesssim 1$
		\end{itemize}
		For the last property it's enough to note that
		$$\hlipn{F_1}{1}{1}\lesssim 1, \quad \infn{F_1}\lesssim a, \quad \infn{\varphi_a}\le1, \quad \infn{\varphi_a'}\lesssim a^{-1},$$
		and therefore Lemma \ref{cuttinglemma} implies $\hlipn{f_1}{1}{1}\lesssim 1$. We continue this process, meaning each time we integrate the function of the previous step from $x_0$ and with $\varphi_a$ cut it off the interval $[-a,a]$ then at k-th step we have  $F_k(x)=\int_{x_0}^{x}f_{k-1}(t)dt$ and $f_k=F_k\varphi_a$.\\
		
		The function $F_k$ vanishes on $I$, is positive on the right hand side of $x_0$ and negative or positive (depending on $k$) on the other side. Also, it follows from Lemma \ref{intlemma} that
		\begin{equation}\label{Fklipnorm}
			\hlipn{F_k}{k}{1}\lesssim 1,
		\end{equation}
		and moreover
		\begin{equation}\label{fksupnorm}
			\infn{F_k}\le \int_{-a}^{a}|f_{k-1}|\lesssim a^k.
		\end{equation}
		Now from (\ref{interpolationinequality}-\ref{fksupnorm}) we get $\hlipn{F_k}{l}{1}\lesssim a^{k-l}$ for $1\le l\le k$. We also have the bound $\infn{\varphi_a^{(k-l)}}\lesssim a^{l-k}$ for $0\le l\le k$. Now these facts combined with Lemma \ref{cuttinglemma} shows that $\hlipn{f_k}{k}{1}\lesssim 1$.
		Finally we observe that at the previous step, $f_{k-1}$ changes sign only at $x_0$, which implies that by integrating it from $x_0$ no cancellation happens and then $|f_k|\gtrsim\min \{\log a, \log\varepsilon^{-1}\}$ on $J$. The proof is now complete.
	\end{proof}
	
	The following is an analog of Lemma \ref{bmolemma} in the $\hlip{k}{n}$ setting.
	
	\begin{lemma}
		Let $0<\varepsilon< 1/4$, $a>4$ and $B$ the unit ball of $\rn$. Then for every set $E\subset\rn$ with $w(E\cap aB) <\varepsilon$, there exists a function $f\in \hlip{k}{n}$ such that
		\begin{itemize}\label{lipslemmaa}
			\item $f=0$  on $E$
			\item $\int_{B}|f|\approx 1 $
			\item $\hlipn{f}{k}{n}\lesssim \frac{1}{\min\left\{\log a, \log\varepsilon^{-1}\right\}}$
			
		\end{itemize}
	\end{lemma}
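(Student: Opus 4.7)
The plan is to reduce to the one-dimensional setting of Lemma \ref{lip1lemma} by exploiting the small width of $E \cap aB$. The hypothesis provides a unit vector $\nu$ and a scalar $x_0 \in [-a, a]$ such that $E \cap aB$ lies in the slab $\{x \in \rn : |\nu \cdot x - x_0| \le \varepsilon/2\}$. Applying Lemma \ref{lip1lemma} to the interval $I = [x_0 - \varepsilon/2, x_0 + \varepsilon/2] \subset [-a, a]$ produces a function $\tilde{f}_k : \mathbb{R} \to \mathbb{R}$ vanishing on $I \cup [-a,a]^c$, of magnitude $\gtrsim M := \min\{\log a, \log \varepsilon^{-1}\}$ on $J := [x_0 - 1/2, x_0 + 1/2]^c \cap [-1, 1]$, with $\infn{\tilde{f}_k} \lesssim a^k$ and $\hlipn{\tilde{f}_k}{k}{1} \lesssim 1$.

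Next, I would lift $\tilde{f}_k$ to $\rn$. Choose a smooth cutoff $\phi \in \cinftyc{aB}$ with $\phi \equiv 1$ on $(a/2)B$ and $\infn{\nabla^j \phi} \lesssim a^{-j}$ for $0 \le j \le k$, and set
\[
f(x) := M^{-1}\, \tilde{f}_k(\nu \cdot x)\, \phi(x).
\]
Vanishing on $E$ is automatic: on $E \cap aB$ the projection $\nu \cdot x$ lies in $I$, so $\tilde{f}_k(\nu \cdot x) = 0$; outside $aB$ the cutoff $\phi$ kills $f$. Since $B \subset (a/2)B$, the cutoff is identically $1$ on $B$, and on the slice $\{x \in B : \nu \cdot x \in J\}$ — whose $n$-dimensional measure is bounded below by a dimensional constant because $|J| \ge 1$ in every configuration of $x_0$ — we have $|f| \gtrsim 1$, giving $\int_B |f| \gtrsim 1$. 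Dividing $f$ by $\int_B |f|$ then yields $\int_B |f| \approx 1$ at the cost of only a bounded factor in the other norms.

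For the Lipschitz bound, I would use that the composition $g(x) := \tilde{f}_k(\nu \cdot x)$ satisfies $\hlipn{g}{l}{n} \le \hlipn{\tilde{f}_k}{l}{1}$, since differences of $g$ in $\rn$ reduce to one-dimensional differences of $\tilde{f}_k$ along $\nu$. Combined with the interpolation inequality \eqref{interpolationinequality}, this yields $\hlipn{\tilde{f}_k}{l}{1} \lesssim \infn{\tilde{f}_k}^{1 - l/k} \hlipn{\tilde{f}_k}{k}{1}^{l/k} \lesssim a^{k - l}$. Feeding these bounds together with $\infn{\nabla^{k-l} \phi} \lesssim a^{-(k-l)}$ into Lemma \ref{cuttinglemma} gives
\[
\hlipn{g\phi}{k}{n} \lesssim \sum_{l=1}^{k} a^{k-l}\cdot a^{-(k-l)} + a^k\cdot a^{-k} \lesssim 1,
\]
and hence $\hlipn{f}{k}{n} \lesssim M^{-1}$.

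The main obstacle is calibrating the cutoff to absorb the growth $\infn{\tilde{f}_k} \lesssim a^k$ inherited from the $k$-fold integration in Lemma \ref{lip1lemma}. A cutoff supported on a unit-scale ball, whose derivatives are $O(1)$, would inflate the product-rule estimate from Lemma \ref{cuttinglemma} by a factor of $a^k$ and destroy the $M^{-1}$ bound. Supporting $\phi$ on $aB$ with $\infn{\nabla^j \phi} \lesssim a^{-j}$ is the precise matching that forces the powers of $a$ to cancel term by term, leaving only the factor $M^{-1}$ as required.
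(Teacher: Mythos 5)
Your proposal is correct and follows essentially the same route as the paper: pick the direction realizing the small width, apply Lemma \ref{lip1lemma} to the resulting interval, compose with the linear functional $\nu\cdot x$, multiply by a cutoff supported in $aB$ with derivative bounds $\lesssim a^{-j}$, and control the product via the interpolation inequality \eqref{interpolationinequality} and Lemma \ref{cuttinglemma}. Your explicit normalization by $\int_B|f|$ and the slice-measure lower bound are slightly more detailed than the paper's treatment but do not change the argument.
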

	\begin{proof}
		Let $\nu$ be a direction for which $w(E\cap aB,\nu) \le\varepsilon$ and choose $c$ such that $|\nu \cdot x+c|\le \varepsilon$ for $x\in E\cap aB $. Also let $\varphi\in\cinftyc{B}$ with $0\le \varphi \le 1$ and $\varphi=1$ on $\frac{1}{2}B$. Now take the function $f_k\in \hlip{k}{1}$ constructed in the previous lemma with $I=\left[-\varepsilon,\varepsilon\right]$ and set 
		\[
		f(x)=\frac{1}{\min\left\{\log a, \log\varepsilon^{-1}\right\}}f_k(\nu \cdot x+c)\varphi(x/a).
		\]
		This function has the desired properties. In fact the first two properties of $f$ follows directly from properties of $f_k$, and for the third one we note that $\hlipn{f_k(\nu \cdot +c)}{k}{n}=\hlipn{f_k}{k}{1}\lesssim 1$, so an application of Lemma \ref{cuttinglemma} gives us $\hlipn{f}{k}{n}\lesssim \frac{1}{\min\left\{\log a, \log\varepsilon^{-1}\right\}}$.
	\end{proof}
	
	Now that we have constructed our functions in the dual of Hardy spaces we can prove the necessity part of Theorem \ref{theorem1}.
	
	\begin{proof}[Proof of necessity] We have two cases:\\
		
		Case $p=1$. Suppose $\Omega$ doesn't satisfy \eqref{h1cc} then for $\delta_j=2^{-j}$ and $a_j=j$ there's a point $x_j$ such that \eqref{h1cc} doesn't hold. Let us denote $B_j=B(x_j,\frac{1}{2}d(x_j))$ and $r_j=\frac{1}{2}d(x_j)$.\\
		
		Now suppose $\Omega$ still is an extension domain and take a sequence of $(1,\Omega)$-atoms $g_j$ with supporting ball $B_j$ and denote their extensions by $\tilde{g_j}$. Then we must have $\hrnn{\tilde{g_j}}\lesssim 1$. Now by a translation and dilation we move every thing to the unit ball and set 
		\[
		E_j=\frac{\Omega^c -x_j}{r_j}, \quad g'_j(x)=r_j^{n}g_j(r_jx+x_j),\quad \tilde{g_j}'(x)=r_j^{n}\tilde{g_j}(r_jx+x_j).
		\]
		Here we note that $E_j$ satisfies the condition in Lemma \ref{bmolemma} with $\varepsilon=\varepsilon_j\approx\delta_ja_j^{n}$ and $a=a_j$. So there exists a function $f_j\in \bmorn$ with properties of Lemma \ref{bmolemma}. Now from duality and the fact that $\hrnn{\tilde{g_j}'}\lesssim 1$ we get
		
		\[
		| \left\langle \tilde{g_j}',f_j	
		\right\rangle |\lesssim\hrnn{\tilde{g_j}'}\|f_j\|_{\bmorn} \lesssim \frac{1}{\min\left\{\log a_j, \log\varepsilon_j^{-1}\right\}}
		\]
		and on the other hand since $f_j=0$ on $E_j$ we have
		\begin{equation}\label{fj=0}
			\left\langle \tilde{g_j}',f_j	
			\right\rangle=\int_{B_1}g_j'f_j,
		\end{equation}
		which implies the following contradiction if we choose $g_j=\text{sgn}(f_j).|B_j|^{-1}$
		
		\[
		1\approx\int_{B_1}|f_j|\approx |\left\langle \tilde{g_j}',f_j	
		\right\rangle| \lesssim \frac{1}{\min\left\{\log a_j, \log\varepsilon_j^{-1}\right\}} \longrightarrow 0.
		\]
		The proof of this case is now complete.\\
		
		Case $p\ne 1$. The above argument holds word by word if instead of Lemma \ref{bmolemma} we use Lemma \ref{lipslemmaa} and note that the functions $f_j$ vanishes in a neighborhood of $E_j$ so \eqref{fj=0} holds.\\
		This finishes the proof of necessity.
	\end{proof}
	\section{Applications}
	As an application of Theorem \ref{theorem1} we are able to characterize the dual of $H^1(\Omega)$  as a complemented subspace of $\bmorn$ (see \cite{MR1060724} the final remark). Let us recall that the dual space of $H^1(\Omega)$ which is denoted by $BMO(\Omega)$ in \cite{MR1060724}, consists of those functions $f$ such that
	\[
	\|f\|_{BMO(\Omega)}= \sup_{2Q\subset \Omega} \dashint_Q|f-\dashint_Qf| + \sup_{Q\in \mathcal{W}(\Omega)}\dashint_Q|f| < \infty,
	\]
	where $\mathcal{W}(\Omega)$ is the set of all Whitney cubes of $\Omega$. By extending $f\in BMO(\Omega)$ to be zero outside of $\Omega$, we get $\tilde{f}\in\bmorn$ with $\bmornn{\tilde{f}}\lesssim \|f\|_{BMO(\Omega)}$. The converse of this inequality is also true when $\Omega$ is an $H^1$-extension domain. Indeed by using duality we have
	\[
	\|f\|_{BMO(\Omega)}\approx\sup_{\|g\|_{H^1(\Omega)}=1} \left\langle f,g \right\rangle \lesssim_{a,\delta} \sup_{\|\tilde{g}\|_{H^1(\rn)}=1} \left\langle \tilde{f},\tilde{g} \right\rangle \approx \bmornn{\tilde{f}}.
	\]
	The above is true because every $g\in H^1(\Omega)$ has a bounded extension $\tilde{g} \in H^1(\rn)$. So we have the following:
	\begin{corollary}
		When $\Omega$ is an extension domain for $H^1$, the dual of $H^1(\Omega)$ is isomorphic to the closed subspace of $\bmorn$, consisting of all functions with support on $\Omega$.
	\end{corollary}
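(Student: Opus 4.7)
The plan is to show that the extension-by-zero map $\Phi\colon BMO(\Omega)\to\bmorn$, defined by $\Phi(f)=\tilde f$ with $\tilde f=f$ on $\Omega$ and $\tilde f=0$ on $\Omega^c$, is a bounded linear isomorphism onto the closed subspace $X=\{h\in\bmorn:h=0\text{ a.e. on }\Omega^c\}$. Since $\Omega$ is an $H^1$-extension domain, condition \eqref{h1cc} forces $|\Omega^c|>0$, so $X$ contains no nonzero constant and descends to a genuine closed subspace of $\bmorn$ modulo constants. Identifying the dual of $H^1(\Omega)$ with $BMO(\Omega)$ as in \cite{MR1060724}, the corollary reduces to the two-sided estimate $\bmornn{\tilde f}\approx\|f\|_{BMO(\Omega)}$ together with surjectivity of $\Phi$ onto $X$.

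For the forward bound $\bmornn{\tilde f}\lesssim\|f\|_{BMO(\Omega)}$, I would estimate the mean oscillation of $\tilde f$ over an arbitrary cube $Q\subset\rn$ by a dichotomy. If $2Q\subset\Omega$, the oscillation of $\tilde f$ on $Q$ equals that of $f$ on $Q$ and is controlled by the first supremum in the definition of $\|f\|_{BMO(\Omega)}$. Otherwise $Q$ meets $\Omega^c$; testing the oscillation against the constant $0$ gives $\dashint_Q|\tilde f|\le |Q|^{-1}\int_{Q\cap\Omega}|f|$, and decomposing $Q\cap\Omega$ via the Whitney family $\mathcal{W}(\Omega)$ (whose cubes meeting $Q$ necessarily have side length at most a constant multiple of $\ell(Q)$) expresses this as a sum of averages of $|f|$ over Whitney cubes of $\Omega$, bounded by the second supremum in $\|f\|_{BMO(\Omega)}$.

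For the converse $\|f\|_{BMO(\Omega)}\lesssim\bmornn{\tilde f}$, I would carry out the duality argument sketched in the paragraph preceding the Corollary. Writing $\|f\|_{BMO(\Omega)}\approx\sup\{\langle f,g\rangle:\|g\|_{H^1(\Omega)}=1\}$, Theorem \ref{theorem1} furnishes a bounded linear extension $g\mapsto\tilde g\in\hrn$ with $\hrnn{\tilde g}\lesssim_{a,\delta}\|g\|_{H^1(\Omega)}$. Because $\tilde f$ is supported in $\overline\Omega$, the pairings $\langle f,g\rangle_\Omega$ and $\langle\tilde f,\tilde g\rangle_{\rn}$ agree, first on the dense class of compactly supported $g$ (for which $\tilde g=g\chi_\Omega$ as in the proof of Lemma \ref{lemma0}) and then by continuity, so $H^1$--$BMO$ duality on $\rn$ yields the required bound. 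Surjectivity of $\Phi$ onto $X$ follows from the standard observation that when $h\in\bmorn$ vanishes on $\Omega^c$, the averages $\dashint_{Q'}|h|$ for $Q'\in\mathcal{W}(\Omega)$ are controlled by $\bmornn{h}$, since a bounded dilate of $Q'$ meets $\Omega^c$ in a set of measure comparable to $|Q'|$; hence $h|_\Omega\in BMO(\Omega)$ and $\Phi(h|_\Omega)=h$. The main obstacle is the cross-cube case of the forward inequality, where the discontinuity created by the zero extension across $\partial\Omega$ must be absorbed by the Whitney-cube average condition built into the definition of $\|f\|_{BMO(\Omega)}$; the remaining steps are routine once this is in place.
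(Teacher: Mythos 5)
Your proposal is correct and follows essentially the same route as the paper: the converse inequality $\|f\|_{BMO(\Omega)}\lesssim\bmornn{\tilde f}$ via $H^1$--$BMO$ duality and the bounded extension operator from Theorem \ref{theorem1}, with the forward bound handled by the standard Whitney-cube dichotomy. You additionally supply details the paper leaves implicit or defers to \cite{MR1060724} (the cross-cube estimate for the zero extension, and surjectivity onto the subspace via condition \eqref{h1cc}); the only nitpick is that the complement of the case $2Q\subset\Omega$ is ``$2Q$ meets $\Omega^c$,'' not ``$Q$ meets $\Omega^c$,'' though your Whitney side-length bound survives this correction unchanged.
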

	Next, let $E:H^1(\Omega) \longrightarrow H^1(\rn)$ be the extension operator,
	then by duality the adjoint operator $E':\bmorn \longrightarrow BMO(\Omega)$ given by
	\[
	\left\langle E(f),g \right\rangle = \left\langle f,E'(g) \right\rangle, 
	\]
	is bounded. Now if $g\in \bmorn$ with support on $\Omega$ and $f$ is any $(1,\Omega)$-atom supported on a Whitney cube $Q$, we have
	\[
	\int fg=\int E(f)g = \int fE'(g).
	\]
	So on each Whitney cube $Q$, $E'(g)=g$, and therefore $E'(g)=g$ on the whole $\Omega$. This allows us to conclude the following:
	
	\begin{corollary}
		Let $\Omega$ be an open subset of $\rn$ satisfying \eqref{h1cc}. Then the subspace of all functions in $\bmorn$, vanishing outside of $\Omega$ is a complemented subspace of $\bmorn$. Moreover, the operator $E'$ defined above is a bounded projection onto this subspace.
	\end{corollary}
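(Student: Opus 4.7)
The plan is to show that the operator $E'$ constructed immediately above the statement is itself the required bounded projection, so the task splits into verifying (a) $E'$ is bounded and lands in the subspace $X \subset \bmorn$ of functions vanishing outside $\Omega$, and (b) $E'$ fixes every element of $X$.

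For (a), I would simply quote duality: because Theorem \ref{theorem1} under \eqref{h1cc} supplies a bounded linear extension $E : H^1(\Omega) \to H^1(\rn)$, its adjoint $E' : \bmorn \to BMO(\Omega)$ is automatically bounded with the same norm. The preceding corollary identifies $BMO(\Omega)$ isometrically with $X$ via extension by zero, so composing with this identification produces a bounded operator $P := E' : \bmorn \to X \hookrightarrow \bmorn$.

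For (b), I would repeat and sharpen the computation already outlined in the excerpt. Take $g \in X$, so $g = 0$ off $\Omega$, and let $f$ be any bounded function supported on a Whitney cube $Q \in \mathcal{W}(\Omega)$; after normalization $f$ is a multiple of a $(1,\Omega)$-atom, hence lies in $H^1(\Omega)$. The extension $E(f)$ represents $f$ on $\Omega$, so since $g$ vanishes outside $\Omega$ we have $\langle E(f), g\rangle = \int_Q f g$, while the adjoint identity gives $\langle E(f), g\rangle = \langle f, E'(g)\rangle = \int_Q f\, E'(g)$. Varying $f$ over all of $L^\infty(Q)$ forces $E'(g) = g$ almost everywhere on $Q$, and since Whitney cubes tile $\Omega$ this yields $E'(g) = g$ on $\Omega$. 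After extension by zero back to $\bmorn$, $P(g) = g$.

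Combining (a) and (b) gives $P^2 = P$, so $P$ is a bounded idempotent on $\bmorn$ with range exactly $X$, which is the definition of $X$ being complemented with projection $E'$. The only genuine subtlety is the usual modulo-constants ambiguity of $\bmorn$: the pairing of $(1,\Omega)$-atoms with $BMO$ classes must be interpreted on the Whitney-cube representatives used in the definition of $\|\cdot\|_{BMO(\Omega)}$, and one must verify that choosing a different BMO representative of $g$ changes both sides of $\int_Q f g = \int_Q f E'(g)$ by the same constant; once one works on a single Whitney cube at a time and only afterwards reassembles the identity on $\Omega$, this causes no problem, and the argument goes through cleanly.
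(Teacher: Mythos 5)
Your proposal is correct and follows essentially the same route as the paper: boundedness of $E'$ by duality, identification of $BMO(\Omega)$ with the subspace of $\bmorn$ supported in $\Omega$ via the preceding corollary, and the verification $E'(g)=g$ by testing against bounded functions on Whitney cubes (i.e.\ multiples of $(1,\Omega)$-atoms). The only quibble is that the identification with $BMO(\Omega)$ is an isomorphism with equivalent norms (constants depending on $a,\delta$), not an isometry, but this does not affect the argument.
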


	\section*{Acknowledgements}
	I'm grateful to Professor G. Dafni for suggesting the problem and valuable discussions and comments. Also, I'm is indebted to the referees for their valuable comments on the manuscript, especially for bringing my attention to the global Markov condition.
	
	\normalsize
	
	\bibliographystyle{plain}
\bibliography{mybib}

\end{document}